\numberwithin{equation}{section}
\def\Z{{\mathbb Z}}
\def\pmod #1{\ ({\rm{mod}}\ #1)}
\def\cP{{\mathcal P}}
\def\Li{{\mathrm{Li}}}
\def\d{{\mathrm{d}}}
\theoremstyle{plain}
\newtheorem{theorem}{Theorem}
\newtheorem{lemma}{Lemma}
\newtheorem{conjecture}{Conjecture}
\theoremstyle{definition}
\newtheorem{remark}{Remark}
\patchcmd{\@settitle}{\uppercasenonmath\@title}{}{}{}
\patchcmd{\@setauthors}{\MakeUppercase}{}{}{}
\patchcmd{\section}{\scshape}{}{}{}
\begin{document}
	\title
	[{Primes of the form $ax+by$ in certain intervals with small solutions}]
	{{Primes of the form $ax+by$ in certain  intervals with  small solutions}}
	\author
	[Y. Ding, T.  Komatsu and H. Liu]
	{Yuchen Ding, Takao Komatsu and Honghu Liu}

	\address{(Yuchen Ding) School of Mathematics, Yangzhou University, Yangzhou 225002, People's Republic of China}
	\email{ycding@yzu.edu.cn}
	
	\address{(Takao Komatsu) Institute of Mathematics, Henan Academy of Sciences, Zhengzhou 450046, People's Republic of China?G Department of Mathematics, Institute of Science Tokyo, Meguro-ku, Tokyo 152-8551, Japan} 
	\email{komatsu@zstu.edu.cn;\, komatsu.t.al@m.titech.ac.jp}
	
	\address{(Honghu Liu) School of Mathematics, Yangzhou University, Yangzhou 225002, People's Republic of China}
	\email{3248952439@qq.com }

	\keywords{Frobenius problem, $\ell$-numerical semigroup, Primes in residue classes, Primes in short intervals, Siegel--Walfisz theorem}
	\subjclass[2020]{11N05, 11D07}

	\begin{abstract}Let $1<a<b$ be two relatively prime integers and $\Z_{\geq 0}$ the set of non-negative integers. For any non-negative integer $\ell$, denote by $g_{\ell,a,b}$ the largest integer $n$ such that the equation 
	\begin{equation}\label{abs-1}
		n=ax+by,\quad  (x,y)\in\Z_{\geq 0}^{2} \tag{1}
	\end{equation}
	has at most $\ell$ solutions. Let $\pi_{\ell,a,b}$ be the number of primes $p\leq g_{\ell,a,b}$ having at least $\ell+1$ solutions for \eqref{abs-1} and $\pi(x)$ the number of primes not exceeding $x$. In this article, we prove that 
	for a fixed integer $a\ge 3$ with $\gcd(a,b)=1$, 
	$$
	\pi_{\ell,a,b}=\left(\frac{a-2}{2(\ell a+a-1)}+o(1)\right)\pi\bigl(g_{\ell,a,b}\bigr)\quad(\text{as}~ b\to\infty).
	$$ 
	For any non-negative $\ell$ and relatively prime integers $a,b$, satisfying $e^{\ell+1}\leq a<b$, we show that
	\begin{equation*}
		\pi_{\ell,a,b}>0.005\cdot \frac{1}{\ell+1}\frac{g_{\ell,a,b}}{\log g_{\ell,a,b}}.
	\end{equation*} 
	Let $\pi_{\ell,a,b}^{*}$ be the number of primes $p\leq g_{\ell,a,b}$ having at most $\ell$ solutions for \eqref{abs-1}.
	For an integer $a\ge 3$ and a large sufficiently integer $b$ with $\gcd(a,b)=1$, we also prove that
	$$   
	\pi^{*}_{\ell,a,b}>\frac{(2\ell+1)a}{2(\ell a+a-1)}\frac{g_{\ell,a,b}}{\log g_{\ell,a,b}}.
	$$ 
	Moreover, if  $\ell<a<b$ with $\gcd(a,b)=1$, then we have
		\begin{equation*}
		\pi^{*}_{\ell,a,b}>\frac{\ell+0.02}{\ell+1}\frac{g_{\ell,a,b}}{\log g_{\ell,a,b}}.
		\end{equation*}
These results generalize the previous ones of Chen and Zhu (2025), who established the results for the case $\ell=0$.

	\end{abstract}
	\maketitle

	\section{Introduction}
Let $a<b$ be two relatively prime positive integers and $\Z_{\geq 0}$ the set of non-negative integers. We denote by $\gcd(a,b)$ the greatest common divisor of $a$ and $b$.
For a non-negative integer $\ell$, the $\ell$-numerical semigroup, denoted by $S_\ell(a,b)$, is the set of the non-negative integers $n$ whose number of integral representations $ax+by=n\, (x,y\in\Z_{\geq 0})$ is more than $\ell$. The set $\ell$-gaps  $G_{\ell}(a,b):=\mathbb{Z}_{\geq 0 }\backslash S_{\ell}(a,b)$ is the set of the non-negative integers $n$ whose number of integral representations $ax+by=n\, (x,y\in\Z_{\geq 0})$ is less than or equal to $\ell$. 
The $\ell$-Frobenius number $g_{\ell,a,b}$ is the largest element in $G_{\ell}(a,b)$ and the $\ell$-genus $n_{\ell,a,b}$ (or $\ell$-Sylvester number) is the number of the elements in $G_{\ell}(a,b)$. Sylvester \cite{Sylvester} proved that $g_{0,a,b}=ab-a-b$, and further showed that exactly one of $n$ and $g_{0,a,b}-n$ belongs to $S_{0}(a,b)$ for any non-negative integer $n\leq g_{0,a,b}$. It is obvious that exactly half of the integers in the interval $[0,g_{0,a,b}]$ can be expressed in the form $ax+by$ with $(x,y)\in\Z_{\geq 0}^{2}$, according to Sylvester's result.
More generally, for any pair of relatively prime integers $(a,b)$, we have (see, e.g., \cite[Corollary 2]{Kom2}) 
\begin{align*}
	g_{\ell,a,b}&=(\ell+1)a b-a-b\,,
	\label{p-frob}\\
	n_{\ell,a,b}&=\frac{1}{2}\bigl((2\ell+1)a b-a-b+1\bigr)\,.
\end{align*}

When $\ell=0$, the $0$-numerical semigroup $S(a,b)=S_0(a,b)$ is the original numerical semigroup. In this case, the $0$-Frobenius number $g_{a,b}=g_{0,a,b}$ and the $0$-genus $n_{a,b}=n_{0,a,b}$ are the original Frobenius number and genus, respectively.  
There are exactly $n_{0,a,b}=(a-1)(b-1)/2$ integers in the set $G_0(a,b)$, as previously mentioned. Let $\mathcal P$ be the set of the prime numbers.  
For any given non-negative integer $\ell$, denote 
$$
\pi_{\ell,a,b}=\#\{p\in\mathcal P: p\in S_{\ell}(a,b), p\leq g_{\ell,a,b}\}\,.  
$$
In 2020,
Ram\'{\i}rez Alfons\'{\i}n and Ska{\l}ba \cite{RS} proved that for any $\varepsilon>0$, there exists a constant $C(\varepsilon)>0$ such that 
\begin{equation}\label{intr-1}
	\pi_{0,a,b}>C(\varepsilon)\frac{g_{0,a,b}}{(\log g_{0,a,b})^{2+\varepsilon}}
\end{equation}
provided that $g_{0,a,b}$ is sufficiently large.
Observing (\ref{intr-1}) and
verifying several cases, they posed the following conjecture \cite[Conjecture 2]{RS}.
\begin{conjecture}\label{con-1}
	Let $2<a<b$ be two relatively prime integers. Then $\pi_{0,a,b}>0$.
\end{conjecture}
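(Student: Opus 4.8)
To prove Conjecture~\ref{con-1} the plan is to produce a prime $p\le g:=g_{0,a,b}=ab-a-b$ lying in $S_0(a,b)$, and in fact to count such primes, following and making quantitative the strategy of Chen and Zhu for $\ell=0$. The starting point is the Ap\'ery-type description of representability: writing $y_0(r)\in\{0,1,\dots,a-1\}$ for the unique residue with $b\,y_0(r)\equiv r\pmod a$, an integer $n$ belongs to $S_0(a,b)$ if and only if $n\ge b\,y_0(n\bmod a)$. Consequently, for any $k\ge 1$ every integer in the interval $I_k:=[\,b(a-1-k),\,g\,]$ with $y_0(n\bmod a)\le a-1-k$ is automatically representable, while the $k$ residue classes with $y_0(n\bmod a)\in\{a-k,\dots,a-1\}$ contribute to $I_k$ at most $\tfrac{bk}{a}+\tfrac{bk^{2}}{2a}+k$ non-representable integers (summing, class by class, the lengths of the excluded sub-intervals). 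Hence it suffices to find in $I_k$ a prime avoiding those $k$ classes, for a suitable $k$.

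I would then split on the size of $a$ relative to $g$. If $a\le 3\log g$ (so $a$ is at most logarithmically large in $b$), take $k=1$: then $I_1=[\,b(a-2),g\,]$ has length $b-a\asymp g/(a-1)$, the only excluded class is $r\equiv -b\pmod a$, and since $\varphi(a)\ge 2$ one may fix a reduced residue $r$ with $r\not\equiv -b\pmod a$. The prime number theorem in arithmetic progressions gives $\bigl(1+o(1)\bigr)\tfrac{b-a}{\varphi(a)\log g}\gg g/(\log g)^{3}$ primes $\equiv r\pmod a$ in $I_1$, and this count is \emph{effective} in the present range: the modulus is small enough that Page's bound $1-\beta\gg q^{-1/2}(\log q)^{-2}$ on a possible Siegel zero forces its contribution $g^{\beta-1}\ll\exp(-c\sqrt{\log g})$ to be negligible. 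Every such prime exceeds $a$, hence is coprime to $a$, and lies in $I_1$, so it is representable.

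In the complementary range $a>3\log g$ one cannot use equidistribution in a single thin residue class mod $a$; instead one exploits that now \emph{almost every} class is good. Take $k=\lfloor a/(3\log g)\rfloor\ge 1$, so $I_k$ has length $bk-a\asymp g/\log g$ and, by the prime number theorem with classical error term (no short-interval input is needed, as $g/\log g$ is a long interval), contains $\sim\tfrac{bk-a}{\log g}\asymp g/(\log g)^{2}$ primes; the $k$ excluded classes account for at most $\tfrac{bk}{a}+\tfrac{bk^{2}}{2a}+k\ll g/(\log g)^{2}$ integers of $I_k$, but with a strictly smaller implied constant precisely because $k$ was chosen $\asymp a/\log g$. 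Subtracting gives $\gg g/(\log g)^{2}>0$ representable primes $\le g$.

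The decisive difficulty --- and the reason the weaker bound \eqref{intr-1} of \cite{RS} did not settle the conjecture, its threshold on $g$ being ineffective (it rests on Siegel's theorem) --- is to keep every estimate simultaneously uniform in $a$ and $b$ and effective, so that only an explicit finite set of pairs needs checking. The threshold $a\asymp\log g$ must lie below the $(\log g)^{2-\varepsilon}$ barrier where Page's bound defuses Siegel zeros, and at the same time must leave $I_k$ long enough for the classical prime number theorem while keeping the bad-class count below the prime count with room to spare; reconciling the two prime counts, and squeezing out the clean constant stated in the abstract, is where the bookkeeping concentrates. The finitely many pairs with $g$ below the explicit threshold are then checked directly, as are the trivial cases: if $a$ is prime then $a=a\cdot 1+b\cdot 0$, and if $b$ is prime then $b=a\cdot 0+b\cdot 1$, are representable primes $\le g$ for all $3\le a<b$.
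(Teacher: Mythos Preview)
The paper does not itself prove Conjecture~\ref{con-1}: it attributes the confirmation to Dai--Ding--Wang and the sharper bound $\pi_{0,a,b}>0.005\,g/\log g$ to Chen--Zhu, whose argument it then extends to $\ell\ge 1$ in the proof of Theorem~\ref{thm-1}. Against that Chen--Zhu/paper method your proposal takes a genuinely different route. Both approaches restrict to a terminal subinterval of $[0,g]$ in which only few residue classes $\bmod\,a$ contain non-representable integers, but the paper fixes the window $(\tfrac{7}{8}g,\,g]$ and upper-bounds the non-representable \emph{primes} in the $\lesssim a/8$ bad classes via Montgomery--Vaughan's Brun--Titchmarsh inequality (Lemma~\ref{lem-3}), balancing this against explicit estimates for $\pi(g)-\pi(cg)$ (Lemmas~\ref{lem-1}--\ref{lem-2}); this is what delivers the optimal order $g/\log g$, at the price of a seven-case split with extensive computer verification. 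You instead bifurcate at $a\asymp\log g$: for small $a$ a single good reduced class is handled by an effective prime number theorem in progressions (Page's bound in place of Siegel--Walfisz), while for large $a$ you take a window of variable length $\asymp g/\log g$ and subtract the crude count of bad \emph{integers}, avoiding any sieve. The outline is correct and suffices for the bare positivity asserted in Conjecture~\ref{con-1}, with the trade-off that it yields only $\pi_{0,a,b}\gg g/(\log g)^{2}$ (or $g/(\log g)^{3}$ in the small-$a$ regime) rather than $g/\log g$. One practical caveat: the effective threshold implicit in your small-$a$ case, coming from Page's bound with modulus up to $3\log g$, may be large, so the residual finite verification could be sizeable; your observation that $a$ or $b$ prime already furnishes a representable prime $\le g$ mitigates this but does not remove it.
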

Dai, Ding, and Wang \cite{Dai/Ding/Wang} confirmed Conjecture \ref{con-1}. Recently, Chen and Zhu \cite{Chen/Zhu} proved that
\begin{equation*}
	\pi_{0,a,b}>0.005 \cdot \frac{g_{0,a,b}}{\log g_{0,a,b}}.
\end{equation*} 

Ram\'{\i}rez Alfons\'{\i}n and Ska{\l}ba also posed another conjecture \cite[Conjecture 3]{RS}.
\begin{conjecture}\label{con-2}
	Let $2<a<b$ be two relatively prime integers; then
	$$
	\pi_{0,a,b}\sim \frac{\pi(g_{0,a,b})}{2} \quad (\text{as}\, a\to \infty),
	$$
	where $\pi(x)$ denotes the number of primes not exceeding $x$.
\end{conjecture}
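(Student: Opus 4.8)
The plan is to prove the sharper quantitative statement that, for every fixed integer $a\ge 3$ with $\gcd(a,b)=1$,
\begin{equation*}
\pi_{0,a,b}=\Bigl(\frac{a-2}{2(a-1)}+o(1)\Bigr)\,\pi\bigl(g_{0,a,b}\bigr)\qquad(b\to\infty),
\end{equation*}
and then to deduce Conjecture \ref{con-2} by letting $a\to\infty$, since $\frac{a-2}{2(a-1)}\to\frac12$; the same argument, with the Siegel--Walfisz theorem applied uniformly, also yields the conjectured asymptotic whenever $b$ is large in terms of $a$, say $a\le(\log b)^{A}$. The engine is the ``small coefficient'' normal form of a representation: since $\gcd(a,b)=1$, every prime $p$ with $p\nmid a$ has a unique representation $p=ax+by$ with $0\le y\le a-1$, namely $y=y(p):=b^{-1}p\bmod a$ and $x=(p-by)/a$; hence $p\in S_0(a,b)$ exactly when $x\ge 0$, that is, when $p\ge b\,y(p)$. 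Moreover $\gcd(y(p),a)=\gcd(p,a)=1$, and if in addition $p\le g_{0,a,b}=ab-a-b$ then necessarily $y(p)\le a-2$, because $b(a-1)>ab-a-b$. Writing $\pi(t;a,c)$ for the number of primes $\le t$ in the residue class $c\bmod a$ and discarding the $O(\log a)$ primes dividing $ab$, one obtains
\begin{equation*}
\pi_{0,a,b}=\sum_{\substack{1\le y\le a-2\\ \gcd(y,a)=1}}\bigl(\pi(g_{0,a,b};a,by)-\pi(by-1;a,by)\bigr)+O(\log a).
\end{equation*}

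It then remains to evaluate the two sums. For the first, as $y$ runs through the reduced residues in $\{1,\dots,a-1\}$ so does $by\bmod a$, so omitting only $y=a-1$ and using the identity $\sum_{\gcd(c,a)=1}\pi(t;a,c)=\pi(t)+O(\log a)$,
\begin{equation*}
\sum_{\substack{1\le y\le a-2\\ \gcd(y,a)=1}}\pi(g_{0,a,b};a,by)=\pi(g_{0,a,b})-\pi(g_{0,a,b};a,c_{*})+O(\log a),\qquad c_{*}\equiv -b\pmod a.
\end{equation*}
By Siegel--Walfisz (or, for fixed $a$, the classical prime number theorem in arithmetic progressions) the subtracted term is $\sim\pi(g_{0,a,b})/\phi(a)$, so the first sum equals $(1-\phi(a)^{-1})\pi(g_{0,a,b})+o\bigl(\pi(g_{0,a,b})\bigr)$. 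For the second sum I would apply Siegel--Walfisz termwise, $\pi(by-1;a,by)=\mathrm{li}(by)/\phi(a)+O\bigl(by\,e^{-c\sqrt{\log b}}\bigr)$ --- legitimate since $a$ is fixed (or $a\le(\log b)^{A}$) and $by\ge b\to\infty$ --- then use $\mathrm{li}(by)\sim by/\log(by)\sim by/\log b$ uniformly for $y\le a-2$ together with the classical evaluation $\sum_{1\le y\le a-1,\,\gcd(y,a)=1}y=\tfrac12 a\phi(a)$ to obtain
\begin{equation*}
\sum_{\substack{1\le y\le a-2\\ \gcd(y,a)=1}}\pi(by-1;a,by)\sim\frac{1}{\phi(a)}\cdot\frac{b}{\log b}\Bigl(\frac{a\phi(a)}{2}-(a-1)\Bigr)=\Bigl(\frac a2-\frac{a-1}{\phi(a)}\Bigr)\frac{b}{\log b}.
\end{equation*}

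Finally one combines the pieces. Because $a$ is fixed, $g_{0,a,b}=ab-a-b\sim(a-1)b$ and $\log g_{0,a,b}\sim\log b$, so $\pi(g_{0,a,b})\sim(a-1)b/\log b$; substituting, the terms $\pm(a-1)/\phi(a)$ cancel between the two sums and
\begin{equation*}
\pi_{0,a,b}\sim\Bigl((a-1)-\frac{a-1}{\phi(a)}-\frac a2+\frac{a-1}{\phi(a)}\Bigr)\frac{b}{\log b}=\frac{a-2}{2}\cdot\frac{b}{\log b}=\frac{a-2}{2(a-1)}\,\pi(g_{0,a,b}),
\end{equation*}
which gives Conjecture \ref{con-2} on letting $a\to\infty$. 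The only genuine obstacle I foresee is the evaluation of the second sum: it requires the prime number theorem for the progressions modulo $a$ uniformly down to thresholds of size $\approx b$, which Siegel--Walfisz supplies precisely when $a$ is small relative to $b$ (in particular for $a$ fixed). Insisting on the conjecture for \emph{all} coprime pairs --- for instance $b=a+1$, where the modulus $a$ has size $\asymp\sqrt{g_{0,a,b}}$ --- would amount to counting primes in progressions to modulus $\asymp\sqrt{t}$, essentially a consequence of GRH; so the reduction above together with Siegel--Walfisz is, unconditionally, about as far as one can currently push the conjectured asymptotic.
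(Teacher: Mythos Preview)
Your argument correctly establishes the fixed-$a$ asymptotic $\pi_{0,a,b}\sim\frac{a-2}{2(a-1)}\pi(g_{0,a,b})$ as $b\to\infty$, by essentially the same route as the paper's proof of Theorem~\ref{th1}: decompose by residue class $by\pmod a$, apply Siegel--Walfisz termwise, and use $\sum_{1\le y\le a-1,\,\gcd(y,a)=1}y=\tfrac12 a\varphi(a)$. Your variations --- handling the first sum via the permutation $y\mapsto by\bmod a$ rather than a second application of Siegel--Walfisz, and dropping $y=a-1$ explicitly so that the $(a-1)/\varphi(a)$ contributions cancel visibly --- are cosmetic. (The paper writes out the proof only for $\ell\ge 1$, citing Chen--Zhu for $\ell=0$, but the method is identical.)

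The genuine gap is the passage to Conjecture~\ref{con-2} itself. The $o(1)$ in your fixed-$a$ result depends on $a$, so ``letting $a\to\infty$'' is not a valid deduction: you need uniformity in $a$, and Siegel--Walfisz supplies it only in the range $a\le(\log b)^A$, whereas Conjecture~\ref{con-2} is stated for all coprime $b>a$ as $a\to\infty$, including $b=a+1$. The paper does not itself prove Conjecture~\ref{con-2} --- it records that Ding, Zhai, and Zhao \cite{Ding/Zhai/Zhao} confirmed it, with the $\ell$-analogue \eqref{intr-Ding/Komatsu-1} in \cite{Ding/Komatsu} --- so there is no in-paper proof to compare against on this point. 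But since that confirmation is unconditional, your closing claim that the full range is ``essentially a consequence of GRH'' is too pessimistic: an idea beyond the termwise Siegel--Walfisz evaluation you use here is evidently available, and what you have actually proved is the $\ell=0$ case of Theorem~\ref{th1}, not Conjecture~\ref{con-2}.
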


 Ding \cite{Ding} proved that Conjecture \ref{con-2} is almost always true. Subsequently, 
 Ding, Zhai, and Zhao \cite{Ding/Zhai/Zhao} confirmed Conjecture \ref{con-2}. More generally, Ding and Komatsu \cite{Ding/Komatsu} proved that 
\begin{equation}\label{intr-Ding/Komatsu-1}
	\pi_{\ell,a,b}\sim \frac{1}{2} \frac{\pi(g_{\ell,a,b})}{\ell+1} \quad (\text{as}\, a\to \infty )
\end{equation}
as a generalization of Conjecture \ref{con-2}. Huang and Zhu \cite{Huang/Zhu} further extended the result of Ding, Zhai, and Zhao to the distribution of prime powers. Very recently, Chen and Zhu \cite{Chen/Zhu} also proved that for any fixed integer $a\geq 3$ with $\gcd(a,b)=1$, then
\begin{equation*}
	\pi_{0,a,b}=\left(\frac{a-2}{2(a-1)}+o(1) \right)\frac{g_{0,a,b}}{\log g_{0,a,b}} \quad (as \,b\to \infty).
\end{equation*}

In this paper, we give a generalization of the results of Chen and Zhu  \cite{Chen/Zhu}.

\begin{theorem}\label{th1}
	Let $\ell$ be any non-negative integer. Then, for a fixed integer $a\ge 3$ with $\gcd(a,b)=1$, we have
	$$
	\pi_{\ell,a,b}=\left(\frac{a-2}{2(\ell a+a-1)}+o(1)\right)\pi\bigl(g_{\ell,a,b}\bigr)\quad(\text{as}~ b\to\infty)\,. 
	$$ 
\end{theorem}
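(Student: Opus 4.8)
The plan is to use Popoviciu's formula for the representation function to reduce the statement to counting primes in a bounded number of arithmetic progressions to the \emph{fixed} modulus $a$, where the prime number theorem for arithmetic progressions applies directly (the modulus $a$ being fixed, no uniformity is needed). Write $r(n)$ for the number of representations $n=ax+by$ with $(x,y)\in\Z_{\geq 0}^2$; Popoviciu's formula gives $r(n)=\frac{n}{ab}-\{\overline{b}n/a\}-\{\overline{a}n/b\}+1$, where $\overline{b}b\equiv 1\pmod a$, $\overline{a}a\equiv 1\pmod b$ and $\{\cdot\}$ denotes the fractional part. The crucial first step is the observation that if $p$ is a prime with $\gcd(p,ab)=1$ and $p\in S_\ell(a,b)$, then $r(p)\geq\ell+1$ forces $p>\ell ab$ (the two fractional parts are non-negative, and they cannot both vanish since $ab\nmid p$); combined with $p\leq g_{\ell,a,b}=(\ell+1)ab-a-b$ this pins down $\lfloor p/(ab)\rfloor=\ell$. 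Setting $c=p-\ell ab\in(0,ab)$, the formula yields $r(p)=\ell+r(c)$, so $p\in S_\ell(a,b)$ is equivalent to $c\in S_0(a,b)$, while $p\leq g_{\ell,a,b}$ is equivalent to $c\leq ab-a-b$. Therefore, after discarding the $O(\log b)$ primes dividing $ab$,
\[
\pi_{\ell,a,b}=\#\bigl\{p\ \text{prime}:\ \ell ab<p\leq(\ell+1)ab-a-b,\ p-\ell ab\in S_0(a,b)\bigr\}+O(\log b).
\]

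Next I would pass to a parametrization. Every $c\in S_0(a,b)$ with $0<c\leq ab-a-b$ has a \emph{unique} representation $c=ax+by$ with $x\geq 0$ and $0\leq y\leq a-1$ (uniqueness because $c<ab$ forces $r(c)=1$ in Popoviciu's formula). Hence the primes counted above are precisely the primes of the form $\ell ab+ax+by$ with $0\leq y\leq a-1$, $x\geq 0$ and $ax+by\leq ab-a-b$. Since $\ell ab+ax+by\equiv by\pmod a$, such a number can be prime and exceed $a$ only when $\gcd(y,a)=1$, which leaves $\phi(a)$ admissible residues $y$. For each such $y$, as $x$ ranges over $0\leq x\leq X(y):=\lfloor(ab-a-b-by)/a\rfloor$, the numbers $\ell ab+ax+by$ run exactly through the integers congruent to $by\pmod a$ in the interval $[\,\ell ab+by,\ \ell ab+by+aX(y)\,]$, whose length is $aX(y)=b(a-1-y)+O_a(1)$ and which lies inside $[\ell ab,(\ell+1)ab]$. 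The value $y=a-1$ gives an empty range ($X(a-1)<0$) and contributes nothing; for $1\le y\leq a-2$ the length tends to infinity, and since $\log t\sim\log b$ throughout the interval, the prime number theorem for arithmetic progressions to the fixed modulus $a$ gives that the number of primes among these is $\dfrac{b(a-1-y)}{\phi(a)\log b}+o\!\left(\dfrac{b}{\log b}\right)$.

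Summing over the $\phi(a)$ admissible residues $y$ and using the elementary identity $\sum_{1\leq y\leq a-1,\ \gcd(y,a)=1}(a-1-y)=\tfrac12\phi(a)(a-2)$ (which follows from the symmetry $y\leftrightarrow a-y$ of the reduced residues modulo $a$ together with $\sum_{1\le y\le a-1,\ \gcd(y,a)=1}y=\tfrac12 a\phi(a)$ for $a\geq 3$), I obtain $\pi_{\ell,a,b}=\dfrac{(a-2)b}{2\log b}\bigl(1+o(1)\bigr)$. On the other hand, since $g_{\ell,a,b}=(\ell+1)ab-a-b\sim(\ell a+a-1)b$ and $\log g_{\ell,a,b}\sim\log b$ as $b\to\infty$ with $a$ fixed, the prime number theorem gives $\pi(g_{\ell,a,b})=\dfrac{(\ell a+a-1)b}{\log b}\bigl(1+o(1)\bigr)$. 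Dividing the two asymptotics yields
\[
\pi_{\ell,a,b}=\left(\frac{a-2}{2(\ell a+a-1)}+o(1)\right)\pi\bigl(g_{\ell,a,b}\bigr),
\]
as desired.

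The analytic input here is light: once the reduction is carried out, everything comes down to counting primes in intervals of length $\asymp b$ in residue classes to a fixed modulus. The step needing the most care is the bookkeeping of the reduction itself --- verifying that $\lfloor p/(ab)\rfloor=\ell$ for the relevant primes, the additivity $r(p)=\ell+r(p-\ell ab)$, and the translation into the $(x,y)$-parametrization with the correct constraint $ax+by\leq ab-a-b$ (so that the admissible $x$-ranges $X(y)$, and hence the interval lengths, come out right). It is also worth flagging, since it explains the shape of the constant, that for fixed $a$ one has $g_{\ell,a,b}\sim(\ell a+a-1)b$ rather than $\sim(\ell+1)ab$, so that $\pi_{\ell,a,b}$ is asymptotically $\tfrac{(a-2)b}{2\log b}$, a quantity independent of $\ell$; the case $\ell=0$ recovers the theorem of Chen and Zhu.
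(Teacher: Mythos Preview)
Your proof is correct and follows essentially the same approach as the paper's: both reduce $\pi_{\ell,a,b}$ to a sum over residue classes $y$ modulo the fixed integer $a$ of prime counts in intervals $[\ell ab+by,\,g_{\ell,a,b}]$, and then invoke the prime number theorem for arithmetic progressions to the fixed modulus $a$. The only cosmetic differences are that you derive the reduction via Popoviciu's formula (the paper instead cites its Lemma~1 from Ding--Komatsu, which is equivalent), and you sum the interval lengths $b(a-1-y)$ directly via $\sum_{\gcd(y,a)=1}(a-1-y)=\tfrac{1}{2}\varphi(a)(a-2)$, whereas the paper subtracts $\sum_{\gcd(y,a)=1}(\ell ab+by)$ from $\varphi(a)\,\mathrm{Li}(g)$ using the Siegel--Walfisz integral form; these are the same calculation organized differently.
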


\begin{theorem}\label{thm-1}
	Let $\ell$ be a non-negative integer and $a<b$ two relatively prime integers satisfying $a\geq e^{\ell+1}$, then
	\begin{equation*}
		\pi_{\ell,a,b}> 0.005 \cdot \frac{1}{\ell+1}\frac{g_{\ell,a,b}}{\log g_{\ell,a,b}}.
	\end{equation*}
	\begin{remark}
		For $\ell=0$, Theorem \ref{th1} and Theorem \ref{thm-1} were proved by Chen and Zhu \cite{Chen/Zhu} as mentioned earlier. By (\ref{intr-Ding/Komatsu-1}) and the prime number theorem, we see that $\ell+1$ in the denominator is required.
	\end{remark}
\end{theorem}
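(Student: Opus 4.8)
The plan is to strip the parameter $\ell$ away by a translation identity, turning the statement into a shifted instance of the $\ell=0$ theorem of Chen--Zhu, then to rerun that argument on the shifted configuration and track the constant; the hypothesis $a\ge e^{\ell+1}$ is used only in this last step. The structural fact behind the reduction is that for $m\ge\ell ab$ the number $r(m)$ of solutions of $m=ax+by$ with $x,y\in\Z_{\ge0}$ satisfies $r(m)=\ell+r(m-\ell ab)$: the solutions of any fixed value form one chain under $(x,y)\mapsto(x+b,y-a)$, and passing from $m$ to $m-\ell ab=m-a(\ell b)$ merely slides that chain by the integer $\ell a$ in the $y$-coordinate. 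Since $[0,\ell ab)\subset G_\ell(a,b)$ trivially, this yields
\[ S_\ell(a,b)=\ell ab+S_0(a,b),\qquad S_\ell(a,b)\cap[0,g_{\ell,a,b}]=\ell ab+\bigl(S_0(a,b)\cap[0,g_{0,a,b}]\bigr), \]
using $g_{\ell,a,b}-\ell ab=g_{0,a,b}$, and hence $\pi_{\ell,a,b}=\#\{p\in\mathcal P:\ p-\ell ab\in S_0(a,b)\cap[0,g_{0,a,b}]\}$. In other words we must count primes in the translate by $\ell ab$ of the set of representable integers lying below the ordinary Frobenius number.

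Writing each $n\in S_0(a,b)\cap[0,g_{0,a,b}]$ uniquely as $n=ax+by$ with $0\le y\le a-1$ exhibits this set as a union over $y$ of initial segments of the progressions $by+a\Z_{\ge0}$ modulo $a$; the shift by $\ell ab$ carries these into the window $[\ell ab,(\ell+1)ab]$ without changing anything combinatorially, so that
\[ \pi_{\ell,a,b}=\sum_{\substack{1\le y\le a-1\\ \gcd(y,a)=1}}\#\bigl\{p\in\mathcal P:\ (y+\ell a)b\le p\le g_{\ell,a,b},\ p\equiv by\pmod a\bigr\}+O(\log ab). \]
Heuristically each inner count is $\sim\phi(a)^{-1}\bigl(\mathrm{li}(g_{\ell,a,b})-\mathrm{li}((y+\ell a)b)\bigr)$, and the sum over $y$ is $\sim\tfrac12\,ab/\log g_{\ell,a,b}\sim\tfrac1{2(\ell+1)}\pi(g_{\ell,a,b})$, far above the asserted bound; the work is to make this unconditional and uniform in $a$. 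Following the scheme of Chen--Zhu, one separates according to the size of the modulus $a$. When $a$ lies in the Siegel--Walfisz range, the Siegel--Walfisz asymptotic is inserted into each progression and summed over $y$, the accumulated error being negligible. When $a$ is larger --- so that, as $a<b$, each progression has length only $\asymp b$ inside a window of length $\asymp ab$ and no single-progression asymptotic is available --- one uses prime counting in short intervals, combined with a Barban--Davenport--Halberstam type second moment over the residues modulo $a$ to discard the few progressions that misbehave.

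Run on the translated configuration, this produces $\pi_{\ell,a,b}>0.005\,(\ell+1)^{-1}g_{\ell,a,b}/\log g_{\ell,a,b}$. In tracking the constant one uses $(\ell+1)^{-1}g_{\ell,a,b}=ab-(a+b)/(\ell+1)$ and $\log g_{\ell,a,b}\ge\log(ab)$ for $\ell\ge1$, while the hypothesis $a\ge e^{\ell+1}$, i.e. $\ell+1\le\log a$, keeps the $\ell$-dependence harmless throughout the previous paragraph: it gives $\log g_{\ell,a,b}=\log(ab)+O(\log\log a)=(1+o(1))\log(ab)$ and bounds, by a quantity tending to $0$, the extra loss that $\ell$ introduces into the error terms of the large-$a$ case, so that the numerical constant of Chen--Zhu is inherited once $g_{\ell,a,b}$ is large; the finitely many remaining pairs $(a,b)$ are checked directly.

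The genuine obstacle --- already the technical heart of Chen--Zhu --- is the large-$a$ case: obtaining an \emph{unconditional} lower bound of the right order for the number of primes in $\ell ab+\bigl(S_0(a,b)\cap[0,g_{0,a,b}]\bigr)$, a set which is a union of $\asymp a$ arithmetic progressions modulo $a$ --- so it contains no long interval unless $b\equiv\pm1\pmod a$ --- and for which the plain Brun--Titchmarsh bound loses the unaffordable factor $a/\phi(a)$. The quantitative short-interval and second-moment inputs are exactly what settle this; the content of the present theorem is that the translation by $\ell ab$ leaves all of that machinery intact, the parameter $\ell$ surfacing only through the innocuous factor $(\ell+1)^{-1}$ and the calibrating hypothesis $a\ge e^{\ell+1}$.
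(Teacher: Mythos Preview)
Your translation identity $S_\ell(a,b)=\ell ab+S_0(a,b)$ is correct and is exactly the paper's Lemma~\ref{lem-5-1}. But from that point on your description of the Chen--Zhu method, and of what must be ``rerun'', is wrong, and the mechanism you propose for the large-$a$ case does not deliver an explicit constant.

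The actual proof does \emph{not} try to lower-bound the sum over the $\varphi(a)$ residue classes directly. For large $a$ there is no unconditional effective lower bound of the shape $\pi(x;a,l)>c\,x/(\varphi(a)\log x)$, and Barban--Davenport--Halberstam is an average over \emph{moduli}, not over residue classes to a single fixed modulus, so it cannot ``discard the few progressions that misbehave'' as you suggest; even in the small-$a$ range Siegel--Walfisz carries an ineffective constant and cannot by itself yield the numerical $0.005$. The paper instead flips to the complement: one fixes the short top interval $(cg,g]$ with $c=1-\tfrac1{8(\ell+1)}$, lower-bounds $\pi(g)-\pi(cg)$ by explicit short-interval estimates (Lemma~\ref{lem-2}, built on the bounds of Bennett--Martin--O'Bryant--Rechnitzer), and then subtracts an \emph{upper} bound for the primes $p\in(cg,g]$ with $p\notin S_\ell$. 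The Sylvester symmetry (Lemma~\ref{lem-5}) forces any such $p$ to satisfy $g-p=ax+by$ with $0\le by\le(1-c)g$, so only about $a/8$ residue classes $y$ occur; applying the Montgomery--Vaughan form of Brun--Titchmarsh (Lemma~\ref{lem-3}) to each of these gives Lemma~\ref{lem-6}. Balancing the two explicit constants across seven ranges of $a$ and $g$, with computer checks at the bottom end, produces $0.005$. This complement-in-a-short-interval idea is the engine of the argument, and it is absent from your sketch.
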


Chen and Zhu were also interested in determining the number of primes in $G_{0}(a,b)$.
For any given non-negative integer $\ell$, denote 
$$
\pi^{*}_{\ell,a,b}=\#\{p\in\mathcal{P}: p\in G_{\ell}(a,b)\}.  
$$
Chen and Zhu \cite{Chen/Zhu-2} proved that for a fixed integer $a\ge 3$ with $\gcd(a,b)=1$, then
\begin{equation*}
	\pi_{0,a,b}^{*} >\frac{a}{2(a-1)}\frac{g_{0,a,b}}{\log g_{0,a,b}} \quad (as\, b\to\infty),
\end{equation*}
and
\begin{equation*}
	\pi_{0,a,b}^{*}> 0.04\pi(g_{0,a,b}).
\end{equation*}
Chen and Zhu also posed the following conjecture \cite[Conjecture 1.4]{Chen/Zhu-2}. 
\begin{conjecture}
	For $b>a\geq 1$ with $\gcd(a,b)=1$, we have
	$$
	\pi_{0,a,b}^{*}\geq \frac{1}{2}\pi(g_{0,a,b}), 
	$$
	and the equality holds if and only if one of the following four cases holds:
	\begin{equation*}
		(1)\, a=1; \quad (2)\, (a,b)=(2,3); \quad (3)\, (a,b)=(2,5); \quad (4)\,(a,b)=(3,5). 
	\end{equation*}
\end{conjecture}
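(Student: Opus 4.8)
The plan is to prove the equivalent statement $\pi_{0,a,b}\le\tfrac12\pi(g_{0,a,b})$ with the same equality cases. Indeed every prime $p\le N:=g_{0,a,b}=ab-a-b$ lies in exactly one of $S_0(a,b)$ and $G_0(a,b)$, so $\pi^{*}_{0,a,b}+\pi_{0,a,b}=\pi(N)$ and the two formulations coincide; the case $a=1$ is trivial, since then $N=-1$ and both quantities vanish, so assume $a\ge2$. For $0\le n\le N$ there is at most one $(x,y)\in\Z_{\ge0}^2$ with $n=ax+by$, and such a pair exists iff $n\ge b\,y_n$, where $y_n\in\{0,1,\dots,a-1\}$ is the unique residue with $b\,y_n\equiv n\pmod a$. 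Restricting to a prime $p\le N$: a prime dividing $a$ is representable only if it equals $a$; a prime $p\nmid a$ is representable iff $p\ge b\,y_p$, and in that case $1\le y_p\le a-2$ (because $p\le N<(a-1)b$); and no prime $\equiv jb\pmod a$ with $\gcd(j,a)>1$ can be $\ge jb>a$. Writing $\pi(x;a,r)=\#\{p\in\mathcal P:p\le x,\ p\equiv r\pmod a\}$ and $J=\{\,1\le j\le a-2:\gcd(j,a)=1\,\}$ (so $\#J=\phi(a)-1$), we obtain
\[
\pi_{0,a,b}=\bigl[\,a\in\mathcal P,\ a\le N\,\bigr]+\sum_{j\in J}\#\{p\in\mathcal P:\ jb\le p\le N,\ p\equiv jb\pmod a\}.
\]
Using $\sum_{\gcd(r,a)=1}\pi(N;a,r)=\pi(N)-\omega(a)$ (valid for $a\ge3$, since then every prime divisor of $a$ is $\le N$), the inequality $\pi_{0,a,b}\le\tfrac12\pi(N)$ becomes the assertion that at least half of the primes $\le N$ are non-representable, namely
\[
(\star)\qquad \omega'(a)+\pi(N;a,-b)+\sum_{j\in J}\#\{p\in\mathcal P:\ p<jb,\ p\equiv jb\pmod a\}\ \ge\ \tfrac12\pi(N),
\]
where $\omega'(a)$ is the number of prime divisors of $a$ smaller than $a$. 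For $a=2$ we have $J=\varnothing$, and $(\star)$ follows from $\pi_{0,2,b}=1$ for odd $b\ge5$ and $\pi_{0,2,3}=0$, with strict inequality unless $b\in\{3,5\}$.

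For $a\ge3$ I would estimate the left-hand side of $(\star)$ via the prime number theorem in arithmetic progressions: each term with $\gcd(r,a)=1$ contributes $\tfrac{1}{\phi(a)}\,\mathrm{li}(x)$ up to an error, and $\log(jb)=\log N+O(\log a)$. Since the totatives of $a$ in $[1,a-1]$ sum to $\tfrac{a\phi(a)}{2}$ while $J$ omits only $j=a-1$, a short computation shows that the left side of $(\star)$ equals $\tfrac{a}{2(a-1)}\,\pi(N)$ plus lower-order terms, exceeding $\tfrac12\pi(N)$ by an amount $\sim\tfrac{\pi(N)}{2(a-1)}\asymp\tfrac{b}{\log(ab)}$ — consistent with the Chen--Zhu bound $\pi^{*}_{0,a,b}>\tfrac{a}{2(a-1)}\cdot\tfrac{g_{0,a,b}}{\log g_{0,a,b}}$ quoted above. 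For bounded $a$ this is made effective by explicit versions of the prime number theorem for progressions, and for $a$ up to a fixed power of $\log b$ by the Siegel--Walfisz theorem, the aggregate error $O\!\bigl(\phi(a)\,Ne^{-c\sqrt{\log N}}\bigr)$ being then dominated by the slack; thus $(\star)$ holds with strict inequality once $b$ exceeds a bound depending only on $a$.

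It then remains to check $(\star)$ for the finitely many remaining pairs, which a direct (computer-assisted) verification should reduce to the equalities $\pi^{*}_{0,2,3}=0=\tfrac12\pi(1)$, $\pi^{*}_{0,2,5}=1=\tfrac12\pi(3)$, $\pi^{*}_{0,3,5}=2=\tfrac12\pi(7)$, together with the trivial $a=1$. For the shape of the equality set it is useful to note that for $a\ge3$ the $\pi(b-1)-[a\in\mathcal P]$ primes below $b$ distinct from $a$ are all non-representable, while the representable primes lie only in the upper parts $[jb,N]$ of at most $\phi(a)-1$ residue classes modulo $a$; combined with the positive error term found above this forces $b$ small and $a\in\{2,3\}$ in any equality case, which the finite check then settles.

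The principal obstacle is uniformity in $a$. When $a$ grows faster than every fixed power of $\log b$ — in particular when $a$ is close to $b$ — the Siegel--Walfisz error $Ne^{-c\sqrt{\log N}}$ swamps the slack $\asymp b/\log(ab)$, and one needs effective lower bounds for $\pi(x;a,r)$ with $x$ only moderately larger than $a$. This seems to demand either Bombieri--Vinogradov- or GRH-type input, or a genuinely new use of the semigroup structure — for instance pairing the residue classes $j\leftrightarrow a-j$ (both coprime to $a$ when $a\ge3$) and bounding each pair by Brun--Titchmarsh. A further, real difficulty is that Siegel's theorem is ineffective, so the threshold beyond which the finite verification must be run is a priori ineffective; completing the argument would require an effective substitute or a self-contained sieve bound. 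I expect this effectivity-and-uniformity issue to be the crux of the problem.
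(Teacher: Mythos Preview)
The statement you are trying to prove is presented in the paper as a \emph{conjecture} of Chen and Zhu, not as a theorem; the paper contains no proof of it, and the results the paper does prove (Theorems~\ref{th:22} and~\ref{thm-4}) are strictly weaker. So there is nothing in the paper to compare your argument against.

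As for the proposal itself: your reduction to $\pi_{0,a,b}\le\frac12\pi(N)$ and the decomposition into residue classes are correct, and the asymptotic computation showing that the non-representable side is $\sim\frac{a}{2(a-1)}\pi(N)$ for fixed $a$ is right (and matches Theorem~\ref{th1} with $\ell=0$). But your argument is, by your own admission, not a proof. You handle $a\le(\log b)^A$ via Siegel--Walfisz and leave ``the finitely many remaining pairs'' to computer verification, but the Siegel constant is ineffective, so that finite set is not actually finite in any usable sense; and for $a$ comparable to $b$ you have no argument at all beyond listing possible tools (Bombieri--Vinogradov, GRH, Brun--Titchmarsh pairing). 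These are exactly the obstructions that make the statement a conjecture rather than a theorem. In particular, the Brun--Titchmarsh idea of pairing $j\leftrightarrow a-j$ does not obviously close the gap: Brun--Titchmarsh loses a factor of~$2$, which is precisely the margin you need to recover. Your write-up is a fair survey of the difficulty, but it does not resolve it, and the paper does not claim to either.
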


Along this line, we also have the following results.
\begin{theorem}\label{th:22}
	Let $\ell$ be a non-negative integer.
	Then, for a given integer $a\ge 3$ and a sufficiently large integer $b$ with $\gcd(a,b)=1$, we have
	$$   
	\pi^{*}_{\ell,a,b}>\frac{(2\ell+1)a}{2(\ell a+a-1)}\frac{g_{\ell,a,b}}{\log g_{\ell,a,b}}.
	$$ 
\end{theorem}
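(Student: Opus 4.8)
The plan is to obtain Theorem \ref{th:22} from Theorem \ref{th1} together with a precise treatment of the secondary terms. First, observe the exact identity
\[
\pi^{*}_{\ell,a,b}=\pi\bigl(g_{\ell,a,b}\bigr)-\pi_{\ell,a,b}\,.
\]
This holds because $G_{\ell}(a,b)$ and $S_{\ell}(a,b)$ partition $\Z_{\ge 0}$ and $G_{\ell}(a,b)\subseteq[0,g_{\ell,a,b}]$, so every prime $p\le g_{\ell,a,b}$ lies in exactly one of $G_{\ell}(a,b)$ (counted by $\pi^{*}_{\ell,a,b}$) and $S_{\ell}(a,b)\cap[0,g_{\ell,a,b}]$ (counted by $\pi_{\ell,a,b}$).

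Next I would note that the qualitative shape of Theorem \ref{th1}, with an unquantified $o(1)$, is not by itself sufficient here: once $\pi_{\ell,a,b}$ is subtracted from $\pi(g_{\ell,a,b})$, the margin over $\tfrac{(2\ell+1)a}{2(\ell a+a-1)}\tfrac{g_{\ell,a,b}}{\log g_{\ell,a,b}}$ is only of order $g_{\ell,a,b}/(\log g_{\ell,a,b})^{2}$. The point is that the proof of Theorem \ref{th1} in fact reduces the counting of primes $p\le g_{\ell,a,b}$ with $p\in S_{\ell}(a,b)$ to $O(a)$ values of $\pi(x;a,j)$, the number of primes up to $x$ in a residue class $j$ modulo the \emph{fixed} integer $a$, each relevant $x$ being of size $\asymp ab$. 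Applying the prime number theorem for arithmetic progressions with fixed modulus (or Siegel--Walfisz), $\pi(x;a,j)=\mathrm{li}(x)/\varphi(a)+O_{a}\bigl(x/(\log x)^{3}\bigr)$, one gets the effective refinement
\[
\pi_{\ell,a,b}=\frac{a-2}{2(\ell a+a-1)}\,\mathrm{li}\bigl(g_{\ell,a,b}\bigr)+O_{a}\!\left(\frac{g_{\ell,a,b}}{(\log g_{\ell,a,b})^{3}}\right).
\]
Together with $\pi(g_{\ell,a,b})=\mathrm{li}(g_{\ell,a,b})+O\bigl(g_{\ell,a,b}/(\log g_{\ell,a,b})^{3}\bigr)$, the identity above, and the algebraic identity $1-\tfrac{a-2}{2(\ell a+a-1)}=\tfrac{(2\ell+1)a}{2(\ell a+a-1)}$, this yields
\[
\pi^{*}_{\ell,a,b}=\frac{(2\ell+1)a}{2(\ell a+a-1)}\,\mathrm{li}\bigl(g_{\ell,a,b}\bigr)+O_{a}\!\left(\frac{g_{\ell,a,b}}{(\log g_{\ell,a,b})^{3}}\right).
\]

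To finish, write $g=g_{\ell,a,b}$ and use $\mathrm{li}(g)=\tfrac{g}{\log g}+\tfrac{g}{(\log g)^{2}}+O\bigl(\tfrac{g}{(\log g)^{3}}\bigr)$, so that
\[
\pi^{*}_{\ell,a,b}=\frac{(2\ell+1)a}{2(\ell a+a-1)}\cdot\frac{g}{\log g}+\frac{(2\ell+1)a}{2(\ell a+a-1)}\cdot\frac{g}{(\log g)^{2}}+O_{a}\!\left(\frac{g}{(\log g)^{3}}\right).
\]
Since $a,\ell$ are fixed and $g\to\infty$ as $b\to\infty$, the coefficient $\tfrac{(2\ell+1)a}{2(\ell a+a-1)}$ of the $g/(\log g)^{2}$ term is a fixed positive constant, so for $b$ large that term overwhelms the $O_{a}(g/(\log g)^{3})$ error, giving $\pi^{*}_{\ell,a,b}>\tfrac{(2\ell+1)a}{2(\ell a+a-1)}\cdot\tfrac{g_{\ell,a,b}}{\log g_{\ell,a,b}}$. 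The hard part will be the middle step: one must reopen the proof of Theorem \ref{th1} and verify that it actually produces an error term of size $o\bigl(g_{\ell,a,b}/(\log g_{\ell,a,b})^{2}\bigr)$, not merely $o(1)\cdot\pi(g_{\ell,a,b})$; everything else is bookkeeping. Alternatively, one can argue directly without invoking Theorem \ref{th1}: writing a prime $p\le g_{\ell,a,b}$ as $p=qab+s$ with $0\le s<ab$, using that such an $s$ is a sum $ax+by$ ($x,y\ge0$) exactly when $s\ge by$ where $y\in[0,a)$ satisfies $by\equiv s\pmod a$, and using that the number of representations of $n+ab$ is exactly one more than that of $n$, one finds
\[
\pi^{*}_{\ell,a,b}=\pi\bigl(g_{\ell,a,b};a,-b\bigr)+\sum_{\substack{1\le j\le a-1,\ \gcd(j,a)=1\\ j\not\equiv -b\pmod a}}\pi\bigl(\ell ab+by_{j}-1;\,a,j\bigr)+O(a),
\]
where $y_{j}\in[0,a)$ is the solution of $by_{j}\equiv j\pmod a$; the prime number theorem for progressions modulo $a$ and the elementary sum $\sum_{1\le y\le a-2,\ \gcd(y,a)=1}y=\tfrac{a\varphi(a)}{2}-(a-1)$ then give the same bound.
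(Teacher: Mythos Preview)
Your main approach has a genuine gap at precisely the ``hard part'' you flagged: the refinement
\[
\pi_{\ell,a,b}=\frac{a-2}{2(\ell a+a-1)}\,\mathrm{li}(g)+O_{a}\!\left(\frac{g}{(\log g)^{3}}\right)
\]
is false, and more generally the error here is \emph{not} $o\bigl(g/(\log g)^{2}\bigr)$. Reopening the proof of Theorem~\ref{th1} does give
\[
\pi_{\ell,a,b}=\mathrm{li}(g)-\frac{1}{\varphi(a)}\sum_{\substack{1\le y\le a-1\\\gcd(y,a)=1}}\mathrm{li}(\ell a b+by)+O_{a}\bigl(g e^{-D\sqrt{\log g}}\bigr),
\]
but $\mathrm{li}$ is not linear to the precision you need: $\mathrm{li}(cg)=c\,\mathrm{li}(g)-c\log c\cdot g/(\log g)^{2}+O_{a}\bigl(g/(\log g)^{3}\bigr)$. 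Summing over $y$ leaves a term $-\tfrac{1}{\varphi(a)}\sum_{y}c_{y}\log c_{y}\cdot g/(\log g)^{2}$ with $c_{y}=(\ell ab+by)/g$, and this is genuinely of order $g/(\log g)^{2}$ (e.g.\ for $a=3$, $\ell=1$ the limiting value of $\sum_{y}c_{y}\log c_{y}$ is $\tfrac{4}{5}\log\tfrac{4}{5}\neq 0$). The argument is salvageable because this extra contribution has the favourable sign: $c_{y}<1$ for $1\le y\le a-2$ while $c_{a-1}=1+a/g\to 1$, so $\sum_{y}c_{y}\log c_{y}<0$ for large $b$, which only \emph{increases} $\pi^{*}_{\ell,a,b}$ over $\tfrac{(2\ell+1)a}{2(\ell a+a-1)}\mathrm{li}(g)$. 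But as written your proof rests on an error estimate that does not hold.

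Your alternative at the end is correct and is essentially the paper's proof. The paper phrases the decomposition via the $\ell$-Ap\'ery set (the elements of $G_{\ell}(a,b)$ in the class $bv\pmod a$ are exactly those below $b(\ell a+v)$), obtaining $\pi^{*}_{\ell,a,b}=\sum_{v=0}^{a-1}\pi\bigl(b(\ell a+v)-1;a,bv\bigr)$, which is your formula reindexed. It then bypasses $\mathrm{li}$ altogether: it applies the termwise lower bound $\pi(x;a,j)>x/(\varphi(a)\log x)$ (Lemma~\ref{pi(x;m,l)}), uses $\log\bigl(b(\ell a+v)\bigr)\le\log g$, and sums via $\sum_{v}(\ell a+v)=\varphi(a)(\ell a+a/2)$. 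This is cleaner than your main route, because the inequality $\pi(x;a,j)>x/(\varphi(a)\log x)$ already absorbs the second-order behaviour of $\mathrm{li}$ in the right direction, so no asymptotic expansion or sign analysis is needed.
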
  
\begin{theorem}\label{thm-4}
	Let $\ell,a$ and $b$ be non-negative integers with $\ell<a<b$ and $\gcd(a,b)=1$. Then
	\begin{equation*}
		\pi^{*}_{\ell,a,b}>  \frac{\ell+0.02}{\ell+1}\frac{g_{\ell,a,b}}{\log g_{\ell,a,b}}.
	\end{equation*}
\end{theorem}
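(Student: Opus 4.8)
The plan is to pass to the complementary count. Since every prime $p\le g_{\ell,a,b}$ belongs to exactly one of $S_\ell(a,b)$ and $G_\ell(a,b)$, we have $\pi^{*}_{\ell,a,b}=\pi(g_{\ell,a,b})-\pi_{\ell,a,b}$, so the theorem is equivalent to the upper bound $\pi_{\ell,a,b}<\pi(g_{\ell,a,b})-\frac{\ell+0.02}{\ell+1}\frac{g_{\ell,a,b}}{\log g_{\ell,a,b}}$. The structural input is the periodicity of $r(n):=\#\{(x,y)\in\Z_{\ge0}^2:ax+by=n\}$: from the formula $r(n)=\lfloor(n-ax_0(n))/(ab)\rfloor+1$ (with $x_0(n)\in[0,b)$, $x_0(n)\equiv a^{-1}n\pmod b$) one gets $r(n+ab)=r(n)+1$ and $r(c)\in\{0,1\}$ for $0\le c<ab$; hence $n\in G_\ell(a,b)$ iff $r(n\bmod ab)+\lfloor n/(ab)\rfloor\le\ell$, which gives the clean decomposition
$$
G_\ell(a,b)\cap\mathcal{P}=\{p\in\mathcal{P}:p<\ell ab\}\ \sqcup\ \{p\in\mathcal{P}:p-\ell ab\in G_0(a,b)\},
$$
equivalently $\pi_{\ell,a,b}=\#\{c\in S_0(a,b):0\le c\le g_{0,a,b},\ c+\ell ab\in\mathcal{P}\}$. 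In particular $\pi^{*}_{\ell,a,b}\ge\pi(\ell ab-1)$, and the case $\ell=0$ is precisely the theorem of Chen and Zhu quoted above.

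For $\ell\ge1$ most of the bound already comes from the single term $\pi(\ell ab-1)$. Using $\ell<a$ (so $\ell+1\le a$, whence $\ell ab\le g_{\ell,a,b}\le a\cdot ab$ and $\log g_{\ell,a,b}\ge\log(\ell ab)$) together with the explicit estimate $\pi(x)>\frac{x}{\log x}\bigl(1+\frac1{\log x}\bigr)$ for $x\ge599$, a direct calculation shows $\pi(\ell ab-1)>\frac{\ell+0.02}{\ell+1}\frac{g_{\ell,a,b}}{\log g_{\ell,a,b}}$ as soon as $\log(\ell ab)$ is at most an explicit constant times $\ell+1$, i.e.\ as soon as $\ell ab$ stays below a fixed (enormous) bound $B_\ell$. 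The finitely many pairs $(a,b)$ with $\ell ab<B_\ell$ for which $g_{\ell,a,b}$ lies outside the range of validity of the explicit prime inequalities are dispatched by direct computation.

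It then remains to treat $\ell\ge1$ with $\ell ab>B_\ell$, where $\pi(\ell ab-1)$ falls short by $O\!\bigl(ab/\log(\ell ab)\bigr)$ primes and one must produce enough primes in $\ell ab+G_0(a,b)$. Writing $G_0(a,b)$ as the union over $i\bmod a$ of the progressions $\{c\equiv i\pmod a:0\le c<by_i\}$, where $y_i\equiv ib^{-1}\pmod a$, $0\le y_i<a$, and using $\sum_{\gcd(i,a)=1}y_i=\tfrac12 a\varphi(a)$, one is reduced to estimating $\sum_{\gcd(i,a)=1}\bigl(\#\{p\le\ell ab+by_i-1:p\equiv i\pmod a\}-\#\{p\le\ell ab-1:p\equiv i\pmod a\}\bigr)+O(\log a)$. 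When $a$ is small compared with $\log(\ell ab)$ this is controlled by the Siegel--Walfisz theorem, yielding $\sim\tfrac12 ab/\log(\ell ab)$ such primes, far more than needed; in the complementary range one instead bounds $\pi_{\ell,a,b}$ from above by applying Brun--Titchmarsh to the progressions $\{c\equiv i\pmod a:by_i\le c\le g_{0,a,b}\}$ making up $S_0(a,b)\cap[0,g_{0,a,b}]$, whose lengths $g_{0,a,b}-by_i=b(a-1-y_i)-a$ satisfy $\sum_{\gcd(i,a)=1}(a-1-y_i)=\tfrac12(a-2)\varphi(a)$, treating the few short-interval classes trivially.

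The hard part is this last Brun--Titchmarsh step. It costs a factor $2$, and since the truth is $\pi_{\ell,a,b}\sim\frac{1}{2(\ell+1)}\pi(g_{\ell,a,b})$ while we can only afford $\pi_{\ell,a,b}$ below roughly $\frac{0.98}{\ell+1}\pi(g_{\ell,a,b})$, that factor $2$ is essentially exactly the margin available. Making the estimate close therefore forces one to exploit every saving at once: the second-order term $\frac{1}{\log g_{\ell,a,b}}$ in the lower bound for $\pi(g_{\ell,a,b})$, the $-2a-2b$ appearing in $\sum_{\gcd(i,a)=1}(g_{0,a,b}-by_i)$, the negligibility of the short-interval classes, and the hypothesis $\ell<a$. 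The regime where $b$ is close to $a$ (so the relevant intervals are short relative to the modulus $a$) with $a$ large is where the Brun--Titchmarsh loss is worst, and is the place where the bookkeeping has to be done most carefully.
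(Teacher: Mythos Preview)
Your structural reduction is fine and the first half of the plan is sound: the identity $\pi^{*}_{\ell,a,b}=\pi(\ell ab-1)+\#\{p\in\mathcal P:\,p-\ell ab\in G_0(a,b)\}$ is correct, and the observation that $\pi(\ell ab-1)$ already suffices once $\log(\ell ab)\le C\ell$ (roughly $C\approx 50$) disposes of a huge range of parameters. The paper in fact uses exactly this trivial lower bound $\pi^{*}_{\ell,a,b}\ge\pi(\ell ab+b)$ in its small-$a$ case.

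The gap is in your treatment of the residual range $\log(\ell ab)>C\ell$. Your Siegel--Walfisz branch is ineffective and so cannot produce the explicit constant $0.02$. More seriously, your Brun--Titchmarsh branch does not close. You write that the factor $2$ in Brun--Titchmarsh is ``essentially exactly the margin available'', but this is too optimistic: the denominator in Montgomery--Vaughan is $\log\bigl((g_{0,a,b}-by_i)/a\bigr)$, not $\log g$, and for the many classes with $a-1-y_i$ small this logarithm is far smaller than $\log g$. A direct check in the regime $\ell=1$, $b=a+1$, $a$ large shows your bound gives $\pi_{\ell,a,b}\lesssim\sum_{z=1}^{a-2}\frac{2bz}{\varphi(a)\log z}\sim\frac{ab}{\log a}$, while the target is $\frac{0.98}{2}\cdot\frac{g}{\log g}\sim\frac{0.49\,ab}{\log a}$; you overshoot by a full factor of $2$, and none of the second-order savings you list (the $1/\log g$ term in $\pi(g)$, the $-2a-2b$, discarding short classes) recovers a constant factor.

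The idea you are missing, which the paper supplies, is to \emph{truncate}: instead of bounding primes in $S_\ell\cap[\ell ab,g]$, choose a small $\vartheta\in(0.02,0.1)$, set $\theta=(\ell+\vartheta)/(\ell+1)$, and bound $\pi^{*}_{\ell,a,b}\ge\pi(\theta g)-\#\{p\in S_\ell:\,\ell ab<p\le\theta g\}$. Now only the residue classes $1\le y\le\vartheta a$ contribute, and every interval has the \emph{same} length $\theta g-\ell ab\approx\vartheta ab$; Brun--Titchmarsh then yields a loss of order $\vartheta^{2}ab/\log(\vartheta b)$, quadratic in $\vartheta$, against a linear gain $\vartheta\cdot g/\log g$ from $\pi(\theta g)$ over $\pi(\ell ab)$. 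Choosing $\vartheta$ small makes the net positive with room to spare. The paper carries this out with $\vartheta\in\{0.05,0.055,0.066\}$ across several ranges of $a$, supplemented by computer checks for small $a,b$.
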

\begin{remark}
	For $\ell=0$, Theorem \ref{th:22} and Theorem \ref{thm-4} were proved by Chen and Zhu \cite{Chen/Zhu-2} as mentioned previously. By (\ref{intr-Ding/Komatsu-1}) and the prime number theorem, it follows that
	$$
	\pi^{*}_{\ell,a,b}\sim \frac{\ell+0.5}{\ell+1} \frac{g_{\ell,a,b}}{\log g_{\ell,a,b}} \quad (as \, a\to\infty),
	$$ and hence the constant $\frac{\ell}{\ell+1}$ is necessary in the statement of Theorem  \ref{thm-4}.
\end{remark}

\section{Proofs} 
 From now on, we assume that $\ell\geq 1$ and use $g$ and $S$ instead of $g_{\ell,a,b}$ and $S_{\ell}(a,b)$ for simplicity.
The first lemma is quoted from Ding and Komatsu \cite[Lemma 2]{Ding/Komatsu}.
 \begin{lemma}\label{lem-5-1}
 	Suppose that the number of solutions to
 	$$ n=ax+by, \quad (x,y)\in \Z_{\geq 0}^{2}$$
 	is exactly $\ell+1$. Then we have 
 	$$
 	n=\ell ab+ax_{0}+by_{0}
 	$$
 	for some $0\leq x_{0}\leq b-1,0\leq y_{0}\leq a-1$.
 \end{lemma}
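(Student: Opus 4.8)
The plan is to count representations of $n$ in terms of one fixed base representation and exploit the structure of all representations of a given integer. First I would recall the standard fact that if $n = ax + by$ with $(x,y) \in \Z_{\ge 0}^2$ is one solution, then every other solution in $\Z^2$ has the form $(x - tb, y + ta)$ for $t \in \Z$; hence the solutions in $\Z_{\ge 0}^2$ correspond exactly to integers $t$ with $x - tb \ge 0$ and $y + ta \ge 0$, i.e. to $t$ in a contiguous block of integers. Consequently, the number of non-negative solutions equals $\left\lfloor x/b \right\rfloor + \left\lfloor y/a \right\rfloor + 1$ when we take the representation with $0 \le y \le a-1$ (the "reduced" one), or more symmetrically, if $(x_0, y_0)$ is the unique representation with $0 \le x_0 \le b-1$ then the number of non-negative solutions is $\lfloor y_0 / a \rfloor + 1$ once we also note $x_0 \le b - 1$ forces $t \le 0$ on that side.

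The key step: suppose $n$ has exactly $\ell + 1$ solutions in $\Z_{\ge 0}^2$. Write the representation of $n$ with $x$-coordinate reduced modulo $b$, say $n = ax_0 + b y_1$ with $0 \le x_0 \le b-1$ and $y_1 \ge 0$ (this exists and is unique since $\gcd(a,b)=1$, because $x_0$ is determined by $a x_0 \equiv n \pmod b$ and then $y_1 = (n - ax_0)/b$, which is automatically $\ge 0$ provided $n$ is large enough — and if $n$ has $\ell+1 \ge 1$ solutions at all, $y_1 \ge 0$ holds). Then all non-negative solutions are $(x_0 + tb,\ y_1 - ta)$ for $t = 0, 1, \dots$ as long as $y_1 - ta \ge 0$; the count is therefore $\lfloor y_1 / a \rfloor + 1 = \ell + 1$, so $\lfloor y_1 / a \rfloor = \ell$, i.e. $y_1 = \ell a + y_0$ with $0 \le y_0 \le a - 1$. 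Substituting back gives $n = a x_0 + b(\ell a + y_0) = \ell ab + a x_0 + b y_0$ with $0 \le x_0 \le b - 1$, $0 \le y_0 \le a-1$, which is exactly the claimed form.

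The main obstacle — really the only delicate point — is verifying that the reduced representation $n = ax_0 + by_1$ with $0 \le x_0 \le b-1$ genuinely has $y_1 \ge 0$, so that it is one of the non-negative solutions being counted. This is where the hypothesis that $n$ has \emph{at least one} (in fact $\ell+1$) solution is used: pick any solution $(x', y') \in \Z_{\ge 0}^2$, reduce $x'$ modulo $b$ by subtracting an appropriate multiple of $b$ and compensating in the $y$-coordinate; since we only ever \emph{increase} the $y$-coordinate when we decrease $x'$ down into the range $[0, b-1]$, the resulting $y_1$ satisfies $y_1 \ge y' \ge 0$. With that in hand, the counting argument above is routine, and one should double-check the edge cases where $y_1$ is an exact multiple of $a$ to confirm the floor computation gives precisely $\ell$.
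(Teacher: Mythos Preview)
Your argument is correct. You pick the unique representation $n = ax_0 + by_1$ with $0 \le x_0 \le b-1$, verify $y_1 \ge 0$ using the existence of at least one non-negative solution, count the non-negative solutions as $\lfloor y_1/a \rfloor + 1$, and conclude $y_1 = \ell a + y_0$ with $0 \le y_0 \le a-1$; substituting gives the claimed form. The edge-case check you flag (when $y_1$ is a multiple of $a$) is harmless, and your justification that $y_1 \ge 0$ (reducing any given non-negative solution only increases the $y$-coordinate) is exactly right.

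As for comparison: the paper does not actually prove this lemma but simply quotes it from Ding and Komatsu \cite[Lemma~2]{Ding/Komatsu}. So you have supplied a self-contained proof where the paper gives only a citation. Your argument is the standard one and is almost certainly what the cited reference contains; there is nothing to add.
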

  Combining Lemma \ref{lem-5-1} with Sylvester's result as mentioned previously, we obtain the following claim.
 \begin{lemma}\label{lem-5}
 Let $a$ and $b$ be relatively prime positive integers. Then, any non-negative integer $n\leq g_{\ell,a,b}$ has exactly $\ell+1$ solutions of $n=ax+by$ with $(x,y)\in\Z_{\geq 0}^{2}$, if and only if, $g_{\ell,a,b}-n$ cannot be expressed in the form $ax+by$ with $x,y\in\Z_{\geq 0}$. 
 \end{lemma}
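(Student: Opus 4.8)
The plan is to reduce the claim to Sylvester's symmetry for the ordinary numerical semigroup $S_{0}(a,b)$ via the shift $n\mapsto n-\ell ab$. The key observation is the identity
$$
(n-\ell ab)+(g-n)=g-\ell ab=(\ell+1)ab-a-b-\ell ab=ab-a-b=g_{0,a,b},
$$
so that $\{\,n-\ell ab,\ g-n\,\}$ is exactly a conjugate pair for $S_{0}(a,b)$ in Sylvester's sense; once this is in place, the lemma becomes a matter of matching up the admissible ranges. (For $\ell=0$ the statement is literally Sylvester's theorem, so I assume $\ell\ge1$, as in the paper.)

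\textbf{Step 1: reformulate the left-hand side.} For a non-negative integer $n$ write $n=ax_{0}+by_{0}$ with $0\le y_{0}\le a-1$; this representation exists and is unique because $\gcd(a,b)=1$, and here $x_{0}\in\Z$ may be negative. The general integer solution of $ax+by=n$ is $(x_{0}-tb,\ y_{0}+ta)$ with $t\in\Z$, and imposing $x,y\ge0$ together with $0\le y_{0}\le a-1$ forces $0\le t\le\lfloor x_{0}/b\rfloor$; hence the number of representations of $n$ equals $\lfloor x_{0}/b\rfloor+1$ if $x_{0}\ge0$ and equals $0$ otherwise. Moreover, if $n\le g$ then $ax_{0}\le n\le(\ell+1)ab-a-b$, so $x_{0}<(\ell+1)b$ and the number of representations is at most $\ell+1$. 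Consequently, for $n\le g$, ``$n$ has exactly $\ell+1$ representations'' is equivalent to $x_{0}\ge\ell b$, hence to the assertion that $n-\ell ab=a(x_{0}-\ell b)+by_{0}$ is representable, i.e. $n-\ell ab\in S_{0}(a,b)$ (with the standing convention that negative integers do not lie in $S_{0}(a,b)$). This equivalence is precisely Lemma \ref{lem-5-1} together with its converse, the converse following from the same lattice-point count.

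\textbf{Step 2: apply Sylvester's result.} It remains to show, for $0\le n\le g$, that $n-\ell ab\in S_{0}(a,b)$ if and only if $g-n\notin S_{0}(a,b)$. If $n-\ell ab\ge0$, then $n\le g$ also gives $n-\ell ab\le g_{0,a,b}$, so both $n-\ell ab$ and its conjugate $g_{0,a,b}-(n-\ell ab)=g-n$ lie in $[0,g_{0,a,b}]$; Sylvester's result then says exactly one of the two lies in $S_{0}(a,b)$, which is the desired equivalence. If instead $n-\ell ab<0$, then the left-hand side fails, while $g-n=g_{0,a,b}-(n-\ell ab)>g_{0,a,b}$ exceeds the Frobenius number and hence lies in $S_{0}(a,b)$, so the right-hand side fails too, and the equivalence still holds. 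Combining Steps 1 and 2 yields the lemma.

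I expect the only real subtlety to be the range bookkeeping in Step 2: Sylvester's theorem only concerns integers in $[0,g_{0,a,b}]$, yet $n-\ell ab$ might a priori be negative. This is harmless because in either direction the hypothesis already confines $n$ to the interval $[\ell ab,\,g]$: if $n$ has exactly $\ell+1$ representations then $n=\ell ab+ax_{0}+by_{0}\ge\ell ab$ by Lemma \ref{lem-5-1}; and if $g-n\notin S_{0}(a,b)$ then $g-n$ is a non-negative non-representable integer, so $g-n\le g_{0,a,b}$, i.e. $n\ge g-g_{0,a,b}=\ell ab$. Once the two ranges are aligned there is nothing further to do; the argument is entirely combinatorial, with no analytic ingredient.
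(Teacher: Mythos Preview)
Your proof is correct and follows exactly the approach the paper indicates: the paper does not write out a proof of this lemma at all, but simply asserts that it follows by ``combining Lemma~\ref{lem-5-1} with Sylvester's result,'' which is precisely your Step~1 (representation-count equivalence) plus Step~2 (Sylvester symmetry under the shift $n\mapsto n-\ell ab$). Your write-up supplies the details the paper leaves implicit, including the converse of Lemma~\ref{lem-5-1} and the boundary case $n<\ell ab$.
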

 
Let $\pi(x;k,l)$ denote the number of primes $p\leq x$ with $p\equiv l \pmod{k}$ and $\varphi(m)$ the Euler totient function of $m$. The third lemma is known as the Siegel--Walfisz Theorem (see \cite[Chapter 22]{Davenport1980}).
 
 \begin{lemma} (Siegel--Walfisz Theorem) \label{pi(x;m,l)=} Let $A$ be a positive real number and $m, l$ two coprime
 	integers with $1\le m\le (\log x)^A$. Then there is a positive
 	constant $D$ depending only on $A$ such that
 	$$\pi (x;m,l)=\frac 1{\varphi (m)} \int_2^x \frac 1{\log t} \d t +O\left(x \exp \big(-D \sqrt{\log x}\big)\right), $$
 	uniformly in $m$.
 \end{lemma}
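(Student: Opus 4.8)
The plan is to recall the classical route to Siegel--Walfisz via Dirichlet $L$-functions, the explicit formula, and the de la Vall\'ee Poussin zero-free region, the only genuinely non-elementary inputs being Siegel's theorem on exceptional real zeros and that classical zero-free region. Write $q=m$, let $\psi(x;q,l)=\sum_{n\le x,\,n\equiv l\pmod{q}}\Lambda(n)$, and for a Dirichlet character $\chi$ modulo $q$ set $\psi(x,\chi)=\sum_{n\le x}\Lambda(n)\chi(n)$. Orthogonality of characters gives
$$
\psi(x;q,l)=\frac{1}{\varphi(q)}\sum_{\chi\bmod q}\overline{\chi}(l)\,\psi(x,\chi).
$$
The principal character contributes $\psi(x)+O(\log^2 x)=x+O\bigl(x\exp(-c\sqrt{\log x})\bigr)$ by the Prime Number Theorem with classical error term, so it suffices to bound each non-principal contribution by $O\bigl(x\exp(-D'\sqrt{\log x})\bigr)$ with $D'=D'(A)>0$; a final passage from $\psi(x;q,l)$ to $\pi(x;q,l)$ by partial summation, together with $\int_2^x\d t/\log t\sim x/\log x$, then turns this into the asserted formula for $\pi(x;q,l)$.

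For a fixed non-principal $\chi$ modulo $q$ I would invoke the truncated explicit formula: for $2\le T\le x$,
$$
\psi(x,\chi)=-\sum_{|\Im\rho|\le T}\frac{x^{\rho}}{\rho}+O\!\left(\frac{x\log^2(qx)}{T}+\log x\right),
$$
where $\rho=\beta+i\gamma$ runs over the non-trivial zeros of $L(s,\chi)$ (with a harmless adjustment when $\chi$ is imprimitive, by passing to the primitive character inducing it). Each summand is $\le x^{\beta}/|\rho|$, so everything hinges on an upper bound for $\beta$. The classical zero-free region supplies $c_0>0$ such that $L(s,\chi)\ne 0$ for $\Re s>1-c_0/\log\bigl(q(|\Im s|+2)\bigr)$, with the sole possible exception of one simple real zero $\beta_1$ belonging to one real character $\chi_1$ modulo $q$. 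For all zeros with $|\gamma|\le T$ other than this one we get $\beta\le 1-c_0/\log(qT)$, hence, using the classical estimate $\sum_{|\gamma|\le T}1/|\rho|\ll \log^2(qT)$,
$$
\Bigl|\sum_{|\Im\rho|\le T}\frac{x^{\rho}}{\rho}\Bigr|\ll x\exp\!\Bigl(-\frac{c_0\log x}{\log(qT)}\Bigr)\log^2(qT)+\frac{x^{\beta_1}}{|\beta_1|}.
$$

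The exceptional term is controlled by Siegel's theorem: for every $\varepsilon>0$ there is an (ineffective) $C(\varepsilon)>0$ with $\beta_1<1-C(\varepsilon)q^{-\varepsilon}$. Since $q\le(\log x)^A$, choosing $\varepsilon=1/(2A)$ gives $x^{\beta_1}\ll x\exp\bigl(-C(\varepsilon)(\log x)^{1/2}\bigr)$, which is within the target bound. For the remaining sum, balance the truncation by taking $\log T=\sqrt{\log x}$; then $\log(qT)\ll\sqrt{\log x}$ (again by $q\le(\log x)^A$), so both $x\exp(-c_0\log x/\log(qT))\log^2(qT)$ and $x\log^2(qx)/T$ are $O\bigl(x\exp(-D'\sqrt{\log x})\bigr)$ for a suitable $D'=D'(A)>0$.

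Summing over the at most $\varphi(q)\le(\log x)^A$ non-principal characters multiplies the error by $(\log x)^A$, which is again absorbed at the cost of replacing $D'$ by a smaller $D=D(A)>0$, yielding $\psi(x;q,l)=x/\varphi(q)+O\bigl(x\exp(-D\sqrt{\log x})\bigr)$ uniformly for $\gcd(q,l)=1$ and $q\le(\log x)^A$; partial summation then gives the stated formula for $\pi(x;q,l)$ with main term $\frac{1}{\varphi(q)}\int_2^x\d t/\log t$. The main obstacle, and the source of the ineffectivity of $D$ in $A$, is the exceptional (Siegel) zero: one must establish that at most one such zero can occur, that its character is real, and that Siegel's (ineffective) lower bound for $1-\beta_1$ suffices precisely because $q$ is restricted to be at most a fixed power of $\log x$; the rest is the now-standard machinery of the explicit formula and the de la Vall\'ee Poussin zero-free region.
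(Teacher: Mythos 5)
The paper does not prove this lemma at all: it is quoted verbatim as the Siegel--Walfisz theorem from Davenport's \emph{Multiplicative Number Theory} (Chapter 22), and your outline is precisely the classical proof given there --- orthogonality of characters, the truncated explicit formula, the de la Vall\'ee Poussin zero-free region, Siegel's theorem with $\varepsilon=1/(2A)$ to kill the exceptional zero under $m\le(\log x)^A$, the choice $\log T=\sqrt{\log x}$, and partial summation back to $\pi(x;m,l)$. Your sketch is correct at this level of detail (the only points left implicit, both standard, are the passage from imprimitive to primitive characters and the uniformity in the partial-summation step for small $t$), so it matches the approach of the paper's cited source.
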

 
  The following lemma is quoted from Bennett, Martin, O'Bryant, and Rechnitzer \cite[(1.15)]{Bennett/Martin} .
 \begin{lemma}\label{lem-1} For $x\ge 1474279333$, we have
 	$$\big|\pi (x)-\Li (x)\big|<0.0008375\frac{x}{\log^2 x}, $$
 	where
 	$$\Li (x)=\int_2^x \frac{\d t}{\log t} .$$
 \end{lemma}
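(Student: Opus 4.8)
This is an explicit form of the prime number theorem, and in the present paper it is quoted verbatim from \cite[(1.15)]{Bennett/Martin}; nevertheless, here is how one proves such a bound. The strategy is to pass from $\pi(x)$ to the Chebyshev function $\psi(x)=\sum_{n\le x}\Lambda(n)$, establish an inequality of the shape $|\psi(x)-x|\le \varepsilon(x)\,x$ with a fully numerical, slowly decreasing $\varepsilon(x)$, and then transfer it to $\pi(x)-\Li(x)$ by partial summation. The estimate on $\psi$ itself rests on four ingredients, each made completely explicit: (i) a truncated Riemann--von Mangoldt explicit formula, (ii) a numerically explicit classical (de la Vall\'ee Poussin type) zero-free region for $\zeta(s)$, refined by a Korobov--Vinogradov region for large ordinates, (iii) the computational verification of the Riemann Hypothesis up to a large height $H$, and (iv) direct tabulation of $\pi(x)$ on a bounded initial range.

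First I would write, for a truncation height $T$, a bound of the form
$$
\psi(x)-x \;=\; -\sum_{|\gamma|\le T}\frac{x^{\rho}}{\rho}\;+\;R(x,T),
$$
where $\rho=\beta+i\gamma$ runs over the non-trivial zeros of $\zeta$ and $R(x,T)$ is an error term controlled by an explicit bound depending on $x$ and $T$. Splitting the zero sum at the verification height $H$: for $|\gamma|\le H$ one has $\beta=\tfrac12$, so $|x^{\rho}/\rho|\le \sqrt{x}/|\gamma|$ and the contribution is $O(\sqrt{x}\log^2 H)$; for $|\gamma|>H$ one uses the explicit zero-free region $\beta<1-c/\log|\gamma|$ together with an explicit bound for the number of zeros with $|\gamma|\le t$ to bound $\sum |x^{\rho}/\rho|$ by something like $x\exp(-c'\sqrt{\log x})$ after optimizing $T$ against $x$. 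Collecting terms yields $|\psi(x)-x|\le \varepsilon(x)\,x$ for all $x$ above some threshold.

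Next I would pass to $\pi(x)$. Since $\pi(x)=\psi(x)/\log x+\int_{2}^{x}\psi(t)/(t\log^2 t)\,\d t$ up to an explicit prime-power correction of size $O(\sqrt{x})$, and $\Li(x)$ has the analogous representation by integration by parts, subtracting gives $|\pi(x)-\Li(x)|\le C\,x/\log^2 x$ once all constants are tracked; tuning the parameters $c$, $H$, and the threshold produces the stated value $C=0.0008375$ valid for $x\ge 1474279333$. Below any threshold coming from the asymptotic analysis and up to $1474279333$, the inequality is checked directly: using tabulated values of $\pi$ at a sufficiently fine net of points together with the monotonicity of $\pi$ and of $\Li$, one verifies $|\pi(x)-\Li(x)|<0.0008375\,x/\log^2 x$ pointwise over the remaining finite range.

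\textbf{Main obstacle.} The difficulty is entirely in the explicit bookkeeping: every classical step---the explicit formula, the zero-free region, the zero-counting estimate, and the partial-summation transfer---must be replaced by a sharp numerical inequality, and the constant cannot be pushed down to $0.0008375$ without carefully optimizing the trade-off between the RH-verification height $H$ and the zero-free-region constant $c$. Since this is precisely the work carried out by Bennett, Martin, O'Bryant and Rechnitzer, for the purposes of this paper it suffices to invoke \cite[(1.15)]{Bennett/Martin}.
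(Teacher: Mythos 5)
Your proposal is correct and matches the paper, which gives no proof of this lemma at all but simply quotes it as inequality (1.15) of Bennett, Martin, O'Bryant and Rechnitzer; your concluding appeal to that citation is exactly what the paper does. The preceding sketch of the explicit-formula/zero-free-region/RH-verification machinery is a fair summary of how the cited bound is actually established, but it is supplementary rather than required here.
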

 As an application of Lemma \ref{lem-1}, we have the following result. 
 
 \begin{lemma}\label{lem-2}
 	Let $\ell$ be a positive integer and $x\geq \max\{\ell e^{2(\ell+1)},7\cdot 10^{5}\}$. Then
 	\begin{equation*}
 		\pi(x)-\pi(cx)\geq 0.11604925 \cdot \frac{1}{\ell+1} \frac{x}{\log x},
 	\end{equation*}
 	where $c=1-\frac{1}{8(\ell+1)}$.
 \end{lemma}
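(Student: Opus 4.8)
The plan is to compare $\pi(x)-\pi(cx)$ with $\Li(x)-\Li(cx)$ and to control the difference using the explicit estimate of Lemma~\ref{lem-1}. Put $c=1-\frac1{8(\ell+1)}$. Since $\ell\ge 1$ we have $c\in[\tfrac{15}{16},1)$, hence $\log(cx)\ge\log x-\log\tfrac{16}{15}$; and since $x\ge 7\cdot 10^5$ we have $\log x>13$. For the main term, monotonicity of $1/\log t$ gives at once
$$
\Li(x)-\Li(cx)=\int_{cx}^{x}\frac{\d t}{\log t}\ \ge\ \frac{(1-c)x}{\log x}=\frac{1}{8(\ell+1)}\cdot\frac{x}{\log x}.
$$

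For the error, suppose first $x\ge 1474279333$, so that Lemma~\ref{lem-1} may be applied at both $x$ and $cx$; subtracting the two estimates gives
$$
\Bigl|\,\pi(x)-\pi(cx)-\bigl(\Li(x)-\Li(cx)\bigr)\Bigr|<0.0008375\left(\frac{x}{\log^2 x}+\frac{cx}{\log^2(cx)}\right).
$$
Using $c<1$, $\log(cx)\ge\log x-\log\tfrac{16}{15}$ and $\log x>13$, the right-hand side is at most $C_0\,\frac{x}{\log^2 x}$ for an explicit constant $C_0<0.0017$. Combining this with the lower bound for the main term,
$$
\pi(x)-\pi(cx)>\frac{1}{\ell+1}\cdot\frac{x}{\log x}\left(\frac18-\frac{C_0(\ell+1)}{\log x}\right),
$$
and the hypothesis $x\ge\ell e^{2(\ell+1)}$ yields $\log x\ge\log\ell+2(\ell+1)\ge 2(\ell+1)$, so $\frac{\ell+1}{\log x}\le\frac12$ and the bracket is at least $\frac18-\frac{C_0}{2}>0.11604925$. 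Chasing the numerical constants here is routine.

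The step needing genuine extra care---and the main obstacle---is the residual range $\max\{\ell e^{2(\ell+1)},7\cdot 10^5\}\le x<1474279333$, where Lemma~\ref{lem-1} is unavailable. Here the hypothesis forces $\ell\le 8$, since $\ell e^{2(\ell+1)}$ is increasing in $\ell$ and already $9e^{20}>1474279333$; thus only finitely many $\ell$ occur. One then finishes either by invoking a sharper explicit bound for $|\pi(y)-\Li(y)|$ valid for smaller $y$ (such as those of B\"uthe--Platt or Dusart type, which in this window are far stronger than needed), or---on the lowest part of the window, where such bounds are weakest---by a direct segmented-sieve verification of the displayed inequality over the bounded set of remaining $x$ for each $\ell\in\{1,\dots,8\}$. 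In every case the constant $0.11604925$ is met with room to spare; the only delicate point is checking that the auxiliary estimate used has a sufficiently good constant throughout the residual interval, the tightest instances being small $\ell$ with $x$ close to $7\cdot 10^5$.
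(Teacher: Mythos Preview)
Your approach is essentially the paper's: split at a large threshold, apply Lemma~\ref{lem-1} above it, and verify the remaining bounded range directly. Two points to tighten.

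First, in your large-$x$ case you need $cx\ge 1474279333$, not merely $x\ge 1474279333$, in order to invoke Lemma~\ref{lem-1} at $cx$. The paper takes the threshold $x\ge 1.573\cdot 10^{9}$ precisely so that $cx\ge(15/16)\cdot 1.573\cdot 10^{9}>1474279333$. With that adjustment your error bound and the conclusion $\tfrac18-\tfrac{C_0}{2}>0.1241>0.11604925$ match the paper's Case~1.

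Second, for the residual range the paper does exactly your option (b): it discretises $[7\cdot 10^{5},\,1.573\cdot 10^{9}]$ into blocks $[10^{u}k,\,10^{u}(k+1))$ with $2\le u\le 6$, $10^{3}\le k<10^{4}$, and for each $\ell\in\{1,\dots,8\}$ checks
\[
\bigl(\pi(10^{u}k)-\pi(c\cdot 10^{u}(k+1))\bigr)\frac{\log\bigl(10^{u}(k+1)\bigr)}{10^{u}(k+1)}\ \ge\ \frac{0.11604925}{\ell+1}.
\]
The minimum of the left side over this grid is exactly $0.11604925\ldots/(\ell+1)$, attained at $\ell=8$, $u=6$, $k=1001$ (so $x\approx 1.001\cdot 10^{9}$), not at small $\ell$ with $x$ near $7\cdot 10^{5}$ as you guessed. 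Thus the constant in the lemma is precisely the output of this discretised check, with no slack; your suggestion of substituting a Dusart/B\"uthe--Platt inequality would need its own numerical verification to reach $0.11604925$, and the paper does not attempt that route.
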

 
 \begin{proof}
 The proofs will be separated into two cases.
 	
 	\textbf{Case 1.}
 	For $x\geq \max\{\ell e^{2(\ell+1)},1.573\cdot 10^{9}\}$, we have 
 	$$
 	cx\geq \left( 1-\frac{1}{16} \right)x\geq \left( 1- \frac{1}{16}\right)\cdot 1.573\cdot 10^{9} > 1474279333.
 	$$
 	By Lemma \ref{lem-1}, we have
 	\begin{align}\label{lem2-ineq-1}
 		\pi(x)-\pi(cx)
 		&> \Li(x)-0.0008375\frac{x}{(\log x)^{2}}-\Li(cx)-0.0008375\frac{cx}{\big(\log (cx)\big)^{2}}\nonumber\\
 		&=\int_{cx}^{x}\frac{\d t}{\log t}-0.0008375\frac{x}{(\log x)^{2}}-0.0008375\frac{cx}{(\log c+\log x )^{2}}\nonumber\\
 		&\geq (1-c)\frac{x}{\log x}-0.0008375\frac{x}{(\log x)^{2}}-0.0008375\frac{cx}{(\log c+\log x)^{2}}\nonumber\\
 		&= \left( (1-c)-0.0008375\frac{1}{\log x}-0.0008375 \frac{c\log x}{(\log c+\log x)^{2}}\right)\frac{x}{\log x}\nonumber\\
 		&= \left( \frac{1}{8(\ell+1)}-\frac{0.0008375}{\log x}-\frac{0.0008375 c}{(\log c)^{2}/\log x+2\log c+\log x}\right)\frac{x}{\log x}.
 	\end{align}
 	Recalling that $c=1-\frac{1}{8(\ell+1)}$ and $x\geq \max\{\ell e^{2(\ell+1)},1.573\cdot 10^{9}\}$, we have 
 	\begin{align*}
 		\frac{(\log c)^{2}}{ \log x}+2\log c+ \log x
 		>2\log\left(1-\frac{1}{16}\right)+\log 10^{9}
 		>20
 		> 2 (\ell+1), \quad (1\leq \ell \leq 8)
 	\end{align*}
 	and
 	\begin{align*}
 		\frac{(\log c)^{2}}{\log x}+2\log c+ \log x 
 		&> 2\log \left(1-\frac{1}{8\cdot (9+1)}\right)+\log\left( \ell e^{2(\ell+1)} \right)\\
 		&>-0.025157565 +\log \ell+2(\ell+1)\\
 		&>2(\ell+1), \quad (\ell\geq 9).
 	\end{align*}
 	So, for any positive integer $\ell$ we have
 	\begin{equation}\label{lem2-ineq-2}
 		\frac{c}{(\log c)^{2}/\log x+2\log c+\log x}< \frac{1}{2} \frac{c}{ \ell+1}<\frac{1}{2}\frac{1}{\ell+1}.
 	\end{equation}
 	Hence, by (\ref{lem2-ineq-1}), $x\geq \ell e^{2(\ell+1)}$, and (\ref{lem2-ineq-2}), we have
 	\begin{align*}
 		\pi(x)-\pi(cx)
 		&> \left(\frac{1}{8(\ell+1)}-\frac{0.0008375}{2(\ell+1)}-\frac{0.0008375}{2(\ell+1)}\right)\frac{x}{\log x}\\
 		&\geq 0.1241625 \cdot \frac{1}{\ell+1}\frac{x}{\log x}.
 	\end{align*}
 	
 	\textbf{Case 2.} For $\max\{\ell e^{2(\ell+1)},7\cdot 10^{5}\}\leq x< 1.573\cdot 10^{9}$, there exist two integers $u,k$ with
 	$2\le u\le 6$ and  $10^3\le k<10^4$ such that $10^uk\le
 	x<10^u(k+1)$. With the help of a computer\footnote{In Case 2, we employed a Python program with the Python packages math and sympy to iterate over all integers $u$ and $k$ satisfying $10^{u}k\leq x<10^{u}(k+1)$, with the aim of finding the minimum value of the coefficient of $\frac{1}{\ell+1}$.}, we have
 	\begin{align*}
 		\big(\pi(x)-\pi(cx)\big)\frac{\log x}{x}
 		\geq &\Big( \pi(10^{u}k)-\pi \big(c \cdot 10^{u}(k+1) \big) \Big)\frac{\log \left( 10^{u}(k+1) \right)}{10^{u}(k+1)}\\
 		\geq &0.11604925\ldots \cdot \frac{1}{\ell+1},
 	\end{align*}
 	and the last equality holds if $\ell=8$, $u=6$, and $k=1001$.
 	
 	This completes the proof of Lemma \ref{lem-2}.
 \end{proof}
 
  We need the lemma quoted from Montgomery and Vaughan \cite[Theorem 2]{Montgomery/Vaughan}. 
 \begin{lemma} \label{lem-3}
 	Let $k,l$ be two integers and let $x,y$ be two positive real numbers
 	with $1\le k<y$. Then
 	$$\pi (x+y; k,l)-\pi (x; k,l)<\frac{2y}{\varphi (k) \log (y/k)}.$$
 \end{lemma}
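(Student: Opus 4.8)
\textbf{Proof strategy for Lemma \ref{lem-3}.} This is the Brun--Titchmarsh inequality in the sharp form of Montgomery and Vaughan, and the plan is to prove it by the large sieve (equivalently, by Selberg's $\Lambda^{2}$ upper bound sieve). First I would dispose of the degenerate case $\gcd(k,l)=d>1$: then every prime $p\equiv l\pmod{k}$ satisfies $d\mid p$, hence $p=d$ and the left-hand side is at most $1$; since $1\le k<y$ forces $\varphi(k)\log(y/k)\le k\log(y/k)\le y/e<2y$, the right-hand side exceeds $1$ and we are done. So from now on $\gcd(k,l)=1$.

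Next I would reduce to a short progression. The primes being counted lie in $l+k\Z$, so, writing $n=l+km'$, they correspond to integers $m'$ ranging over an interval of length $N':=\lfloor y/k\rfloor\le y/k$. I would sieve this interval by removing, for each prime $p\le z$ with $p\nmid k$, the single class $m'\equiv -l\,\overline{k}\pmod{p}$ that forces $p\mid n$; the primes $p\mid k$ need no sieving, since $\gcd(l,k)=1$ makes $n=l+km'$ automatically coprime to them --- and it is exactly these ``free'' primes that produce the denominator $\varphi(k)$ in place of $k$. Every prime $p^{*}\equiv l\pmod{k}$ in $(x,x+y]$ with $p^{*}>z$ yields a survivor, so $\pi(x+y;k,l)-\pi(x;k,l)\le S(z)+\pi(z)$, where $S(z)$ denotes the number of survivors.

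To bound $S(z)$ I would invoke the large sieve. With $e(\alpha):=e^{2\pi i\alpha}$ and $a_{m'}$ the indicator of the survivors, the Montgomery--Vaughan inequality
\begin{equation*}
\sum_{q\le z}\ \sum_{\substack{a\bmod q\\ \gcd(a,q)=1}}\Bigl|\sum_{m'}a_{m'}e(am'/q)\Bigr|^{2}\ \le\ (N'+z^{2})\sum_{m'}|a_{m'}|^{2},
\end{equation*}
together with the elementary lower bound $\sum_{\gcd(a,q)=1}\bigl|\sum_{m'}a_{m'}e(am'/q)\bigr|^{2}\ge \mu^{2}(q)\,S(z)^{2}/\varphi(q)$ for squarefree $q$ coprime to $k$ (the survivors avoid the class mod each $p\mid q$ that forces $p\mid n$, and Cauchy--Schwarz over the remaining $p-1$ classes does the rest), gives
\begin{equation*}
S(z)\ \le\ \frac{N'+z^{2}}{G_{k}(z)},\qquad G_{k}(z):=\sum_{\substack{q\le z\\ \gcd(q,k)=1}}\frac{\mu^{2}(q)}{\varphi(q)}.
\end{equation*}
I would then bound the denominator by means of $\prod_{p\mid q}\frac{1}{p-1}=\sum_{\operatorname{rad}(m)=q}\frac{1}{m}$, which yields $G_{k}(z)\ge\sum_{m\le z,\,\gcd(m,k)=1}\frac{1}{m}\ge\frac{\varphi(k)}{k}\log z$ (Mertens' estimate along a progression, valid once $z$ is not too small relative to $k$), and choose $z=\sqrt{N'}/\log N'$, so that $z^{2}=o(N')$ and $\log z=\tfrac12\log N'\,(1+o(1))$. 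Since $\pi(z)=o\bigl(y/(\varphi(k)\log y)\bigr)$ and $N'\le y/k$, this leads to
\begin{equation*}
\pi(x+y;k,l)-\pi(x;k,l)\ \le\ \bigl(1+o(1)\bigr)\,\frac{2y}{\varphi(k)\log(y/k)}\qquad(y/k\to\infty).
\end{equation*}

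The argument so far already delivers the constant $2$; the genuinely hard part is that Lemma \ref{lem-3} is a \emph{clean strict} inequality valid for \emph{every} pair $1\le k<y$, with no error term and no hypothesis that $y/k$ be large. To remove the $o(1)$ one must (i) use the \emph{optimal} constant in the large sieve --- the ``$+z^{2}$'' with coefficient exactly one, which Montgomery and Vaughan extract from a Hilbert-type inequality for $\sum_{m\ne n}a_{m}\overline{a_{n}}/(m-n)$ --- rather than any version carrying an implied constant; (ii) choose the level $z$ optimally and verify the elementary estimates for $G_{k}(z)$ and $\pi(z)$ explicitly over the ranges that occur; and (iii) treat separately the regime of small $y/k$, where the asymptotic reasoning is vacuous. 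This uniform, error-free bookkeeping is the delicate step, and it is exactly what makes the Montgomery--Vaughan bound stronger than the textbook asymptotic Brun--Titchmarsh inequality; for it I would simply cite their Theorem~2, as the authors do.
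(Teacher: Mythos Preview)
The paper does not prove this lemma at all: it is simply quoted verbatim from Montgomery and Vaughan \cite[Theorem~2]{Montgomery/Vaughan}, with no argument given. Your sketch correctly outlines the large-sieve route to Brun--Titchmarsh with constant $2+o(1)$, correctly handles the degenerate case $\gcd(k,l)>1$, and --- crucially --- correctly identifies that the passage from $2+o(1)$ to a clean strict $2$ valid for every $1\le k<y$ is the genuinely delicate part, for which you too defer to \cite{Montgomery/Vaughan}; so your treatment and the paper's are in agreement, yours being a more informative expansion of the same citation.
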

 
   We need the following lemma quoted from Chen and Zhu \cite[Lemma 2.8]{Chen/Zhu}.
 \begin{lemma}\label{lem-4}
 	Let $m,k$ be two relatively prime integers. Then for any positive real number $x$ and any integers $l$, we have
 	\begin{equation*}
 		\sum_{\stackrel{1\leq n\leq x}{\gcd(nk+l,m)=1}}1=\frac{\varphi(m)}{m}x+\theta_{x,m,k,l} 2^{\omega(m)},
 	\end{equation*}  
 	where $\theta_{x,m,k,l}$ is a real number with $\lvert \theta_{x,m,k,l}\rvert \leq 1$ and $\omega(m)$ the number of distinct prime divisors of $m$.
 \end{lemma}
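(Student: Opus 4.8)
The plan is to expand the coprimality condition $\gcd(nk+l,m)=1$ by M\"obius inversion over the squarefree divisors of $m$ and then count each resulting arithmetic progression by hand. We may assume $m\ge 1$ is an integer, the case $m=1$ being trivial.

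First I would use that $\sum_{d\mid\gcd(nk+l,m)}\mu(d)$ equals $1$ when $\gcd(nk+l,m)=1$ and $0$ otherwise, so that after interchanging the order of summation
$$\sum_{\substack{1\le n\le x\\\gcd(nk+l,m)=1}}1=\sum_{1\le n\le x}\ \sum_{d\mid\gcd(nk+l,m)}\mu(d)=\sum_{d\mid m}\mu(d)\,\#\bigl\{n:1\le n\le x,\ d\mid nk+l\bigr\}.$$
Only squarefree $d\mid m$ contribute. For such a $d$ one has $\gcd(k,d)=1$ because $\gcd(k,m)=1$; hence $k$ is invertible modulo $d$, and the divisibility $d\mid nk+l$ is equivalent to $n\equiv -l\bar k\pmod d$, a single residue class, where $\bar k k\equiv 1\pmod d$.

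Next I would invoke the elementary estimate that for any $d\ge 1$ and any fixed residue class modulo $d$, the number of integers $n$ with $1\le n\le x$ in that class equals $x/d+\vartheta_d$ with $|\vartheta_d|<1$; this follows by splitting $\{1,\dots,\lfloor x\rfloor\}$ into consecutive blocks of length $d$ (each contributing exactly one such $n$) together with a shorter terminal block, and it also covers the degenerate range $0<x<1$. Substituting this and using the classical identity $\sum_{d\mid m}\mu(d)/d=\prod_{p\mid m}(1-1/p)=\varphi(m)/m$ gives
$$\sum_{\substack{1\le n\le x\\\gcd(nk+l,m)=1}}1=x\sum_{d\mid m}\frac{\mu(d)}{d}+\sum_{d\mid m}\mu(d)\vartheta_d=\frac{\varphi(m)}{m}\,x+\sum_{d\mid m}\mu(d)\vartheta_d.$$
The last sum is supported on the $2^{\omega(m)}$ squarefree divisors of $m$, each term having absolute value $<1$, so its modulus is $\le 2^{\omega(m)}$; taking $\theta_{x,m,k,l}:=2^{-\omega(m)}\sum_{d\mid m}\mu(d)\vartheta_d$ yields the claimed formula with $|\theta_{x,m,k,l}|\le 1$.

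There is no genuine obstacle here: this is a routine sieve identity. The only points deserving a line of care are that $k$ is truly invertible modulo every divisor $d$ that contributes (which is exactly what $\gcd(k,m)=1$ buys us, once we restrict to $\mu(d)\ne 0$), and that the per-progression counting error stays strictly below $1$ uniformly in all parameters, including the boundary behaviour at $n=1$ and the edge cases $x<1$ and $m=1$.
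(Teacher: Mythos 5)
Your argument is correct: the M\"obius expansion over divisors of $m$, the reduction of $d\mid nk+l$ to a single residue class modulo $d$ (using $\gcd(k,m)=1$), the per-class count $x/d+\vartheta_d$ with $|\vartheta_d|<1$, and the identity $\sum_{d\mid m}\mu(d)/d=\varphi(m)/m$ together give exactly the stated formula, with the error supported on the $2^{\omega(m)}$ squarefree divisors. Note that the paper itself does not prove this lemma but quotes it from Chen and Zhu; your inclusion--exclusion argument is the standard (and essentially the only natural) proof of such a statement, so there is nothing to compare beyond confirming it is sound.
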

 
 The following lemma is an application of Lemma \ref{lem-4}.
 \begin{lemma}\label{lem-6}
 	Let $b>a\geq 10$ be two relatively prime integers. Then we have
 	\begin{align*}
 		\#&\left\{p\in\cP:cg< p\leq g, p\notin S \right\}\\
 		&\quad\quad\quad\quad\quad	< \frac{1}{4(\ell+1)\varphi(a)} \Bigg(\sum_{\stackrel{1\leq y\leq a/8+1}{\gcd(y,a)=1} }1\Bigg) \left( 1-\frac{\log \big(8(\ell+1)a \big)}{\log g}\right)^{-1}\cdot \frac{g}{\log g}\\
 		&\quad\quad\quad\quad\quad	<  \frac{1}{4(\ell+1)} \left( \frac{1}{8}+\frac{1}{a}+\frac{2^{\omega(a)}}{\varphi(a)} \right) \left( 1-\frac{\log \big(8(\ell+1)a \big)}{\log g}\right)^{-1}\cdot \frac{g}{\log g},
 	\end{align*}
 	where $\ell\geq 1$ and $c=1-\frac{1}{8(\ell+1)}$.
 \end{lemma}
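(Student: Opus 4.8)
The plan is to reduce the count to primes lying in boundedly many arithmetic progressions modulo $a$ inside an interval of length $(1-c)g$, and then to apply Brun--Titchmarsh (Lemma~\ref{lem-3}) to each progression and the sieve estimate of Lemma~\ref{lem-4} to count the relevant progressions.

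First I would use Lemma~\ref{lem-5} to rewrite the membership condition. Since $g=(\ell+1)ab-a-b$, an integer $n\le g$ with at least $\ell+1$ representations $n=ax+by$ in fact has exactly $\ell+1$ of them (any more would force $n\ge(\ell+1)ab>g$), so Lemma~\ref{lem-5} says that $p\le g$ has at most $\ell$ representations, i.e. $p\notin S$, precisely when $g-p=ax+by$ for some $x,y\in\Z_{\ge0}$. For $cg<p\le g$ one has $0\le g-p<(1-c)g=\tfrac{g}{8(\ell+1)}<\tfrac{ab}{8}$, so in any such representation $by\le g-p<\tfrac{ab}{8}$, hence $0\le y<\tfrac{a}{8}$, and the residue of $g-p$ modulo $a$ together with $\gcd(a,b)=1$ determines $y$ uniquely. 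Writing $p=g-ax-by\equiv-b(y+1)\pmod a$ and noting that, under $b>a\ge10$ and $\ell\ge1$, one has $g>8(\ell+1)a$ (a short computation) and hence $cg>(1-c)g>a$, a prime $p>cg$ is coprime to $a$, forcing $\gcd(y+1,a)=1$. After the substitution $y\mapsto y+1$ this gives
\[
\#\{p\in\cP:cg<p\le g,\ p\notin S\}\ \le\ \sum_{\substack{1\le y\le a/8+1\\ \gcd(y,a)=1}}\bigl(\pi(g;a,l_y)-\pi(cg;a,l_y)\bigr),
\]
with $l_y\equiv-by\pmod a$, the sum running over a set containing all admissible values of $y$.

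Next I would bound each term on the right. Since $1\le a<(1-c)g$, Lemma~\ref{lem-3} applied to the progression $l_y\pmod a$ and the interval $(cg,g]$ of length $(1-c)g=\tfrac{g}{8(\ell+1)}$ gives
\[
\pi(g;a,l_y)-\pi(cg;a,l_y)<\frac{2(1-c)g}{\varphi(a)\log\!\bigl((1-c)g/a\bigr)}
=\frac{g}{4(\ell+1)\varphi(a)\log g}\Bigl(1-\tfrac{\log(8(\ell+1)a)}{\log g}\Bigr)^{-1},
\]
where in the last step one expands $\log\!\bigl(\tfrac{g}{8(\ell+1)a}\bigr)=\log g-\log(8(\ell+1)a)$ and uses $g>8(\ell+1)a$. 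Summing over $y$ yields exactly the first displayed bound in the lemma, with the factor $\sum_{1\le y\le a/8+1,\ \gcd(y,a)=1}1$.

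Finally I would apply Lemma~\ref{lem-4} with $m=a$, $k=1$, $l=0$ and $x=\tfrac{a}{8}+1$ to get
\[
\sum_{\substack{1\le y\le a/8+1\\ \gcd(y,a)=1}}1\ \le\ \frac{\varphi(a)}{a}\Bigl(\frac{a}{8}+1\Bigr)+2^{\omega(a)}=\frac{\varphi(a)}{8}+\frac{\varphi(a)}{a}+2^{\omega(a)};
\]
dividing by $\varphi(a)$ and substituting into the first bound produces the second. The analytic input is entirely standard; the care needed is in the bookkeeping of the first step --- that the range of $y$ is genuinely $[0,a/8)$, that each admissible prime lands in a single residue class so the union bound loses nothing essential, and that $\gcd(y+1,a)=1$ --- and in verifying the elementary inequality $g>8(\ell+1)a$ (equivalently $a<(1-c)g$), which is where the hypothesis $b>a\ge10$ is used.
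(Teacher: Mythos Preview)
Your proof is correct and follows essentially the same approach as the paper: both invoke Lemma~\ref{lem-5} to convert $p\notin S$ into the representability of $g-p$, restrict to a short range of $y$-values determined by $(1-c)g<ab/8$, apply the Brun--Titchmarsh bound of Lemma~\ref{lem-3} to each residue class modulo $a$, and finish with Lemma~\ref{lem-4}. The only cosmetic differences are that the paper first carries the upper limit $(1-c)g/b+1$ through the Brun--Titchmarsh step before bounding it by $a/8+1$, and verifies $(1-c)g>a$ by a slightly different elementary computation; your explicit remark that any $n\le g$ with at least $\ell+1$ representations has exactly $\ell+1$ (so Lemma~\ref{lem-5} applies) is a nice clarification the paper leaves implicit.
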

 \begin{remark}
 	If $a>6\cdot 10^{4}$, we have (see \cite[p.10]{Chen/Zhu})
 	\begin{equation}\label{C1-ineq-1}
 		\frac{2^{\omega(a)}}{\varphi(a)}<0.00556112.
 	\end{equation}
 \end{remark}
 \begin{proof}
 	We appoint that $x$ and $y$ are always non-negative integers. By Lemma \ref{lem-5}, we have
 	\begin{align}\label{lem-6-1}
 		\#&\left\{p\in\cP:cg< p\leq g, p\notin S \right\}\nonumber\\
 		&\quad\quad\quad\quad\quad\quad=\#\left\{p\in\cP:cg< p\leq g, g-p=ax+by,0\leq by\leq (1-c)g\right\}\nonumber\\
 		&\quad\quad\quad\quad\quad\quad\leq \#\left\{p\in \cP: cg< p\leq g,p\equiv -b-by \pmod{a}, 0\leq by\leq (1-c)g \right\}\nonumber\\
 		&\quad\quad\quad\quad\quad\quad=\sum_{\stackrel{0\leq y\leq (1-c)g/b}{\gcd(1+y,a)=1}} \Big( \pi(g;a,-b-by)-\pi(cg;a,-b-by) \Big)\nonumber\\
 		&\quad\quad\quad\quad\quad\quad=\sum_{\stackrel{1\leq y\leq (1-c)g/b+1}{\gcd(y,a)=1} }\Big( \pi(g;a,-by)-\pi(cg;a,-by) \Big).
 	\end{align}
 	Since $g=(\ell+1)ab-a-b$, $b>a\geq 10$, and $c=1-\frac{1}{8(\ell+1)}$, we have 
 	$$
 	g-cg=\frac{1}{8(\ell+1)}g=\frac{ab}{8}-\frac{a+b}{8(\ell+1)}
 	>\left( \frac{b}{8}-\frac{1}{16}-\frac{b}{16a}\right)a
 	>a.
 	$$
 	It follows that
 	\begin{equation}\label{lem-6-2}
 		\pi(g;a,-by)-\pi(cg;a,-by)<\frac{2(1-c)g}{\varphi(a)\log\big((1-c)g/a\big)}
 	\end{equation}
 	from $g-cg>a$ and Lemma \ref{lem-3}.
 	Hence, by \eqref{lem-6-1}, \eqref{lem-6-2}, and $1-c=\frac{1}{8(\ell+1)}$ we have
 	\begin{align}\label{lem-6-3}
 	 \#&\left\{p\in\cP:cg< p\leq g, p\notin S \right\}\nonumber\\
 		&\quad\quad\quad<\sum_{\stackrel{1\leq y\leq (1-c)g/b+1}{\gcd(y,a)=1} } \frac{2(1-c)g}{\varphi(a)\log\big((1-c)g/a\big)}\nonumber\\
 		&\quad\quad\quad=\frac{1}{4(\ell+1)\varphi(a)}\sum_{\stackrel{1\leq y\leq (1-c)g/b+1}{\gcd(y,a)=1} } \frac{g}{\log g-\log \big(8(\ell+1)a\big)}\nonumber\\
 		&\quad\quad\quad=\frac{1}{4(\ell+1)\varphi(a)}\Bigg(\sum_{\stackrel{1\leq y\leq (1-c)g/b+1}{\gcd(y,a)=1}}1\Bigg)\left(1-\frac{\log\big(8(\ell+1)a\big)}{\log g}\right)^{-1}\cdot\frac{g}{\log g}.
 	\end{align}
 	Recalling that $g=(\ell+1)ab-a-b$ and $1-c=\frac{1}{8(\ell+1)}$, we have
 	\begin{align}\label{lem-6-4}
 	\frac{(1-c)g}{b}=\frac{(\ell+1)ab-a-b}{8(\ell+1)b}<\frac{a}{8}.
 	\end{align}
 	Therefore, by \eqref{lem-6-3}, \eqref{lem-6-4}, and Lemma \ref{lem-4}, we have
 	\begin{align*}
 		\#&\left\{p\in\cP:cg< p\leq g, p\notin S \right\}\\
 	&\quad\quad\quad\quad\quad	< \frac{1}{4(\ell+1)\varphi(a)} \Bigg(\sum_{\stackrel{1\leq y\leq a/8+1}{\gcd(y,a)=1} }1\Bigg) \left( 1-\frac{\log \big(8(\ell+1)a \big)}{\log g}\right)^{-1}\cdot \frac{g}{\log g}\\
 	&\quad\quad\quad\quad\quad	<  \frac{1}{4(\ell+1)} \left( \frac{1}{8}+\frac{1}{a}+\frac{2^{\omega(a)}}{\varphi(a)} \right) \left( 1-\frac{\log \big(8(\ell+1)a \big)}{\log g}\right)^{-1}\cdot \frac{g}{\log g},
 	\end{align*}
 	which completes the proof of Lemma \ref{lem-6}.
 \end{proof}
 
 We also need the following lemma quoted by Bennett, Martin, O'Bryant, and Rechnitzer \cite[Corollay 1.6]{Bennett/Martin}.
 \begin{lemma}\label{lem-7}
 	Let $m,l$ be two relatively prime integers with $1\leq m \leq 1200$. Then for all $x\geq 50 m^{2}$, we have
 	\begin{equation*}
 		\frac{x}{\varphi(m)\log x}<\pi(x;m,l)<\frac{x}{\varphi(m)\log x}\left( 1+\frac{5}{2\log x}\right).
 	\end{equation*}
 \end{lemma}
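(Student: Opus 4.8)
The statement is an explicit prime number theorem for arithmetic progressions, valid already from $x\ge 50m^{2}$ and uniformly over $1\le m\le 1200$, so the plan is to pass through the Chebyshev function $\theta(x;m,l)$ of the primes $p\le x$ with $p\equiv l\pmod{m}$ and to recover $\pi(x;m,l)$ at the end by partial summation through
$$
\pi(x;m,l)=\frac{\theta(x;m,l)}{\log x}+\int_{2}^{x}\frac{\theta(t;m,l)}{t\log^{2}t}\,\d t .
$$
The case $m=1$, where the claim reads $x/\log x<\pi(x)<(x/\log x)\bigl(1+5/(2\log x)\bigr)$ for $x\ge 50$, I would dispatch first from classical explicit bounds: the lower bound already holds for $x\ge 11$, while the upper bound follows from a Rosser--Schoenfeld/Dusart-type inequality such as $\pi(x)<(x/\log x)\bigl(1+3/(2\log x)\bigr)$ in the relevant range (and from Lemma~\ref{lem-1} for large $x$), completed by a finite check on $50\le x<59$. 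The substance of the lemma is therefore the uniformity over all $m\le 1200$ together with the validity from $x\ge 50m^{2}$.

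For $2\le m\le 1200$ the engine is the explicit formula
$$
\psi(x;m,l)=\frac{x}{\varphi(m)}-\frac{1}{\varphi(m)}\sum_{\chi\bmod m}\overline{\chi}(l)\sum_{\rho}\frac{x^{\rho}}{\rho}+O^{*}\!\bigl(\text{lower-order terms}\bigr),
$$
where $\rho$ ranges over the non-trivial zeros of $L(s,\chi)$. I would split the zero sum at a height $H=H(m)$ up to which the Generalized Riemann Hypothesis has been verified numerically for all Dirichlet characters of modulus $\le 1200$: the low zeros then contribute $O\bigl(x^{1/2}(\log mx)^{2}\bigr)$, while the zeros with $|\gamma|>H$ are controlled by an explicit Kadiri-type zero-free region together with an explicit zero-density estimate, contributing a term carrying an $\exp(-c\sqrt{\log x})$ saving. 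To keep every constant small one works with a smoothed weight, so that the sum over zeros converges rapidly, and then de-smooths at a controlled cost. Assembling these pieces yields $\bigl|\psi(x;m,l)-x/\varphi(m)\bigr|<\varepsilon_{m}(x)\,x/\varphi(m)$ with an explicit, genuinely small $\varepsilon_{m}(x)$ as soon as $x\ge 50m^{2}$; the threshold $50m^{2}$ is precisely what makes the $x^{1/2}$-sized contribution of the verified zeros, and the zero-free-region tail, fall below the target error of size $\asymp x/\log^{2}x$.

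From the estimate on $\psi$ one removes the prime-power contribution via $\theta(x;m,l)=\psi(x;m,l)-\psi(x^{1/2};m,l)-\psi(x^{1/3};m,l)-\cdots$, each correction being $O(x^{1/2}/\log x)$ or smaller, and then feeds $\theta$ into the partial-summation identity above. Its main term produces exactly the shape $\frac{x}{\varphi(m)\log x}\bigl(1+\tfrac{1}{\log x}+O(1/\log^{2}x)\bigr)$, so what remains is to check that the accumulated error never exceeds the slack $\tfrac{3}{2\log x}\cdot\frac{x}{\varphi(m)\log x}$ between the $1/\log x$ coming from the main term and the claimed $5/(2\log x)$, and symmetrically that it never eats into the clean lower bound $x/(\varphi(m)\log x)$. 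The main obstacle is exactly this bookkeeping of constants: one needs $\varepsilon_{m}(x)$ small enough uniformly over all $\sim 1200$ moduli and all the way down to $x=50m^{2}$, which is feasible only because GRH has in fact been checked far enough for these particular $m$; in whatever residual ranges of $x$ the inequalities remain too weak, one would finish by a direct computation of $\pi(x;m,l)$, in the same spirit as the computer-assisted endgame of Lemma~\ref{lem-2}.
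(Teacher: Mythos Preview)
The paper does not prove this lemma at all: it is simply quoted from Bennett, Martin, O'Bryant and Rechnitzer \cite[Corollary~1.6]{Bennett/Martin}, with no argument given. Your sketch --- explicit formula for $\psi(x;m,l)$, numerical GRH verification up to a height $H(m)$ for all characters of modulus $\le 1200$, an explicit zero-free region for the remaining zeros, removal of prime powers, and partial summation to pass from $\theta$ to $\pi$ --- is indeed the strategy that Bennett--Martin--O'Bryant--Rechnitzer follow in their paper, so you have correctly identified how the result is obtained at the source. But as a proof proposal for \emph{this} paper it is doing vastly more than required: here the lemma is an off-the-shelf input, and the appropriate ``proof'' is a one-line citation.

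One caveat on the sketch itself: while the outline is right, the constants in this kind of argument are extremely delicate, and the claim that the error can be pushed below $\tfrac{3}{2\log x}\cdot\frac{x}{\varphi(m)\log x}$ uniformly down to $x=50m^{2}$ for every $m\le 1200$ is precisely the hard computational content of \cite{Bennett/Martin}; asserting it ``is feasible'' glosses over a substantial verification that cannot be reproduced by hand. So even as an independent proof the proposal would need to defer to \cite{Bennett/Martin} for that endgame, which again reduces to simply citing the result.
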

 
 The following lemma quoted from Chen and Zhu \cite[Corollary 2.3]{Chen/Zhu-2} is a variant of Lemma \ref{lem-7}.
 
 \begin{lemma}\label{pi(x;m,l)}
 	Let $m,l$ be two relatively prime integers. Then there is $X(m)$ such that for all $x\geq X(m)$,
 	\begin{equation*}
 		\frac{x}{\varphi(m)\log x}<\pi(x;m,l)<\frac{x}{\varphi(m)\log x}\left(1+\frac{5}{2\log x}  \right).
 	\end{equation*}
 \end{lemma}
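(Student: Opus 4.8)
The plan is to deduce the estimate from the Siegel--Walfisz theorem (Lemma~\ref{pi(x;m,l)=}) together with the elementary asymptotic expansion of $\Li(x)$, exploiting that $m$ is now \emph{fixed}. First I would apply Lemma~\ref{pi(x;m,l)=} with $A=1$: for every $x\geq e^{m}$ one has $1\le m\le\log x$, so
$$\pi(x;m,l)=\frac{1}{\varphi(m)}\Li(x)+O\bigl(x\exp(-D\sqrt{\log x})\bigr),\qquad \Li(x)=\int_2^x\frac{\d t}{\log t},$$
with a constant $D=D(1)>0$. Integrating by parts twice gives
$$\Li(x)=\frac{x}{\log x}+\frac{x}{(\log x)^2}+O\!\left(\frac{x}{(\log x)^3}\right)$$
with an absolute implied constant; since $x\exp(-D\sqrt{\log x})=o\bigl(x/(\log x)^3\bigr)$, combining the two displays yields
$$\pi(x;m,l)=\frac{x}{\varphi(m)\log x}+\frac{x}{\varphi(m)(\log x)^2}+E(x,m),$$
where $|E(x,m)|\le \dfrac{C_0\,x}{\varphi(m)(\log x)^3}+C_1\,x\exp(-D\sqrt{\log x})$ for absolute constants $C_0,C_1>0$.

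From here both inequalities follow by pitting the secondary main term against the error. For the lower bound, $\pi(x;m,l)-\frac{x}{\varphi(m)\log x}=\frac{x}{\varphi(m)(\log x)^2}+E(x,m)$, and the right-hand side is positive as soon as $\frac{C_0}{\log x}+C_1\varphi(m)(\log x)^2\exp(-D\sqrt{\log x})<1$, which holds for all $x$ beyond some threshold $X_1(m)$. For the upper bound,
$$\frac{x}{\varphi(m)\log x}\Bigl(1+\frac{5}{2\log x}\Bigr)-\pi(x;m,l)=\frac{3x}{2\varphi(m)(\log x)^2}-E(x,m),$$
which is positive once $x$ exceeds some threshold $X_2(m)$, by the same comparison. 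Setting $X(m)=\max\{e^{m},X_1(m),X_2(m)\}$ completes the proof; the coprimality of $m$ and $l$ enters only to guarantee that $\pi(x;m,l)$ counts an arithmetic progression to which Siegel--Walfisz applies.

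There is no genuine obstacle: the positive secondary term $\frac{x}{\varphi(m)(\log x)^2}$ in $\Li(x)$ dominates every error term for large $x$, which simultaneously pushes $\pi(x;m,l)$ above $\frac{x}{\varphi(m)\log x}$ and keeps it below $\frac{x}{\varphi(m)\log x}\bigl(1+\frac{5}{2\log x}\bigr)$, the factor $\tfrac52>1$ leaving ample slack. The only point worth emphasizing — and the reason the statement claims merely the \emph{existence} of $X(m)$, rather than an explicit range like the $x\ge 50m^2$ of Lemma~\ref{lem-7} — is that the constant $D$ and the implied constant in Siegel--Walfisz, together with the $\varphi(m)$-dependence, are ineffective, so the threshold $X(m)$ produced here grows with $m$ in an unspecified way.
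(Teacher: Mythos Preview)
Your argument is correct. The paper itself offers no proof of this lemma: it simply quotes the result as Corollary~2.3 of Chen and Zhu \cite{Chen/Zhu-2}, noting that it is a variant of Lemma~\ref{lem-7}. Your derivation from Siegel--Walfisz (Lemma~\ref{pi(x;m,l)=}) together with the two-term asymptotic expansion of $\Li(x)$ supplies exactly what the citation covers, and is the natural self-contained route given the tools already assembled in the paper. Your closing remark on ineffectivity is also apt: it explains why the statement asserts only the \emph{existence} of $X(m)$, in contrast with the explicit range $x\ge 50m^2$ of Lemma~\ref{lem-7} (which, however, is restricted to $m\le 1200$).
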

  The final lemma is quoted from Rosser and Schoenfeld \cite[Corollary 1]{RosserSchoenfeld1962}.
 \begin{lemma}\label{pi(x)estimate} 
 	 For $x\ge 17$ we have
 	$\pi (x)> x/\log x$.
 \end{lemma}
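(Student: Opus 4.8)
The plan is to convert $\pi^*_{\ell,a,b}$ into an explicit sum of prime-counting functions in arithmetic progressions via the complementation principle of Lemma \ref{lem-5}, and then estimate that sum. Write $g=g_{\ell,a,b}=(\ell+1)ab-a-b$ and let $r(n)$ denote the number of $(x,y)\in\Z_{\ge0}^{2}$ with $ax+by=n$. Since the nonnegative solutions of $ax+by=n$ form a single arithmetic progression with step vector $(b,-a)$, we get $r(n)\le n/(ab)+1$, hence $r(n)\le\ell+1$ whenever $n\le g$. Thus for a prime $p\le g$ we have $p\in G_\ell(a,b)$ (that is, $r(p)\le\ell$) if and only if $r(p)\neq\ell+1$, which by Lemma \ref{lem-5} holds if and only if $g-p$ is representable as $ax'+by'$ with $x',y'\ge0$. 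Now a nonnegative integer $m$ is so representable exactly when $m\ge bv(m)$, where $v(m)\in\{0,\dots,a-1\}$ is the unique residue with $m\equiv bv(m)\pmod{a}$; since $g\equiv-b\pmod{a}$, writing $j\in\{1,\dots,a\}$ for the residue of $-pb^{-1}$ modulo $a$ turns the representability of $g-p$ into the inequality $p\le g-(j-1)b$. Sorting primes $p$ by the class $p\equiv-jb\pmod{a}$ — the classes with $\gcd(j,a)>1$ contributing exactly the $\omega(a)$ prime divisors of $a$, each of which trivially satisfies $p\le g-(j-1)b$ — yields the key identity
\begin{equation*}
\pi^*_{\ell,a,b}=\omega(a)+\sum_{\substack{1\le j\le a-1\\\gcd(j,a)=1}}\pi\bigl(g-(j-1)b;a,-jb\bigr).
\end{equation*}

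Granting the identity, the main case (where $b$ is large relative to $a$) follows cleanly. For each admissible $j$ one has $0<g-(a-2)b\le g-(j-1)b\le g$, so once $g-(a-2)b=\ell ab-a+b$ exceeds the threshold of Lemma \ref{pi(x;m,l)} (or of Lemma \ref{lem-7} when $a\le1200$), the lower bound there together with $\log\bigl(g-(j-1)b\bigr)\le\log g$ gives $\pi\bigl(g-(j-1)b;a,-jb\bigr)>\frac{g-(j-1)b}{\varphi(a)\log g}$. Summing over $j$ and using $\sum_{1\le j\le a-1,\,\gcd(j,a)=1}j=\frac{1}{2}a\varphi(a)$, the coprimality weights cancel and leave
\begin{equation*}
\pi^*_{\ell,a,b}>\frac{1}{\varphi(a)\log g}\sum_{\substack{1\le j\le a-1\\\gcd(j,a)=1}}\bigl(g-(j-1)b\bigr)=\frac{g-\frac{1}{2}(a-2)b}{\log g}=\frac{\frac{1}{2}(2\ell+1)ab-a}{\log g}.
\end{equation*}
Finally one checks directly that $\frac{1}{2}(2\ell+1)ab-a-\frac{\ell+0.02}{\ell+1}g=0.48\,ab-a+\frac{(\ell+0.02)(a+b)}{\ell+1}>0$ for all $\ell\ge1$ and $2\le a<b$, which is the theorem, with room to spare (consistent with the true constant $\frac{\ell+0.5}{\ell+1}$ from the remark).

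The main obstacle is the complementary range, where $b$ is not large relative to $a$: there $a$ can be as large as $\asymp\sqrt g$, and no unconditional lower bound for $\pi(x;a,l)$ is available for $x\asymp ab$. For bounded $a$ this leaves only finitely many pairs $(a,b)$, which can be settled by direct computation (using $\pi^*_{\ell,a,b}\ge\pi^*_{0,a,b}>0.04\,\pi(g_{0,a,b})$ from Chen and Zhu's $\ell=0$ result as a convenient aid). For large $a$ with comparatively small $b$ one uses instead the equivalent form $\pi^*_{\ell,a,b}=\pi(g)-\Sigma$, where $\Sigma=\sum_{\gcd(j,a)=1}\#\{\,p\equiv-jb\pmod{a}:g-(j-1)b<p\le g\,\}$, bounding $\Sigma$ from above by Montgomery--Vaughan (Lemma \ref{lem-3}) on the long intervals and by trivial counting on the short ones, as in the $\ell=0$ treatment of Chen and Zhu \cite{Chen/Zhu-2}. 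I expect this casework, together with the careful accounting of small primes in the identity, to be the only delicate points; the substance of the argument is the reduction above.
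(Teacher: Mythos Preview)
Your proposal does not address the stated lemma at all. The statement in question is the classical Rosser--Schoenfeld inequality $\pi(x)>x/\log x$ for $x\ge 17$, which the paper does not prove but simply quotes from \cite{RosserSchoenfeld1962}. There is nothing to argue here beyond the citation; any self-contained proof would require the explicit Chebyshev-type analysis of that paper and is neither expected nor present.

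What you have written is instead a sketch of the proof of Theorem~\ref{thm-4} (the lower bound for $\pi^{*}_{\ell,a,b}$). As such it is broadly in the right spirit---your key identity is essentially the Ap\'ery-set decomposition the paper uses in the proof of Theorem~\ref{th:22}, and your treatment of the ``$b$ large relative to $a$'' case parallels that proof closely. But even as a proof of Theorem~\ref{thm-4} it is incomplete: you explicitly defer the range where $b$ is not large compared to $a$, and that is precisely where the work lies (the paper handles it by the four-case analysis using the Montgomery--Vaughan sieve of Lemma~\ref{lem-3}, the explicit bounds of Lemma~\ref{lem-7}, and computer verification). In any case, none of this bears on Lemma~\ref{pi(x)estimate}, which is a one-line citation.
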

\subsection{Proof of Theorem \ref{th1}}
 \begin{proof}[Proof of Theorem \ref{th1}]
 	By Lemmas \ref{lem-5-1} and \ref{pi(x;m,l)=}, for $\ell\geq 1$ we have
 	\begin{align*} 
 		\pi_{\ell,a,b}
 		&=\sum_{\substack{\ell a b<p<g\\p=\ell a b+a x+b y\\x,y\in\mathbb Z_{\ge 0}}}1
 		=\sum_{\substack{y=1\\ \gcd(y,a)=1}}^{a-1}\sum_{\substack{p\equiv b y\pmod{a}\\\ell a b+b y\le p<g}}1\\
 		&=\sum_{\substack{y=1\\\gcd(y,a)=1}}^{a-1}\big(\pi(g;a,b y)-\pi(\ell a b+b y; a, b y)\big)+O_{\ell,a}(1)\\ 
 		&=\sum_{\substack{y=1\\\gcd(y,a)=1}}^{a-1}\left(\frac{1}{\varphi(a)}\int_2^{g}\frac{\d t}{\log t}
 		-\frac{1}{\varphi(a)}\int_2^{\ell a b+b y}\frac{\d t}{\log t}+O\big(g e^{-D\sqrt{\log g}}\big) \right)\\ 
 		&=\int_2^{g}\frac{\d t}{\log t}-\frac{1}{\varphi(a)}\sum_{\substack{y=1\\\gcd(y,a)=1}}^{a-1}\int_2^{\ell a b+b y}\frac{\d t}{\log t}+O\big(g e^{-D\sqrt{\log g}}\big)\,.
 	\end{align*}  
 	Given $1\le y\le a-1$. Integrating by parts, we obtain
 	\begin{align}\label{thm-1-eq-1}
 	\int_2^{\ell a b+b y}\frac{\d t}{\log t}=\frac{\ell a b+b y}{\log(\ell a b+b y)}-\frac{2}{\log 2}+\int_2^{\ell a b+b y}\frac{\d t}{(\log t)^{2}}.
 	\end{align}
 	Note that
 	$$
 	\frac{\ell a b+b y}{\log(\ell a b+b y)}=\frac{\ell a b+b y}{\log g}\frac{\log g}{\log(\ell a b+b y)}
 	$$ 
 	and 
 	$$
 	\frac{\log g}{\log(\ell a b+b y)}=\frac{\log b+\log\bigl((\ell+1)a-\frac{a}{b}-1\bigr)}{\log b+\log(\ell a+y)}\to 1\quad(b\to\infty)\,. 
 	$$ 
 	Hence, we have 
 	\begin{align}\label{thm-1-eq-2}
 		\frac{\ell ab+by}{\log(\ell ab+by)} \sim \frac{\ell ab+by}{\log g} \quad (b\to \infty)\,.
 	\end{align}
 	It is clear that
 	\begin{align*}
 		\int_2^{\ell a b+b y}\frac{\d t}{(\log t)^{2}}&=\int_2^{\sqrt{b}}\frac{\d t}{(\log t)^{2}}+\int_{\sqrt{b}}^{\ell a b+b y}\frac{\d t}{(\log t)^{2}}\\
 		&<\frac{\sqrt{b}}{(\log 2)^{2}}+\frac{\ell a b+b y}{(\log \sqrt{b})^{2}}\\
 		&<\frac{\sqrt{b}}{(\log 2)^{2}}+\frac{\ell ab+by}{\log g}\frac{8\log g}{(\log b)^{2}}
 	\end{align*} 
 	and 
 	$$
 	\frac{ 8  \log g}{(\log b)^{2}}=\frac{8\log b + 8\log \big((\ell+1)a-\frac{a}{b}-1\big)}{(\log b)^{2}}\to 0\quad(b\to\infty)\,. 
 	$$ 
 	So, we have 
 	\begin{align}\label{thm-1-eq-3}
 		\int_2^{\ell a b+b y}\frac{\d t}{(\log t)^{2}} = \frac{\ell ab+by}{\log g}o(1) \quad (b\to \infty)\,.
 	\end{align}
 	Thus, by \eqref{thm-1-eq-1}, \eqref{thm-1-eq-2}, and \eqref{thm-1-eq-3} we have
 	$$
 	\int_2^{\ell a b+b y}\frac{\d t}{\log t}=\frac{\ell a b+b y}{\log g}\bigl(1+o(1)\bigr)\quad(b\to\infty)\,. 
 	$$ 
 	Hence, 
 	\begin{align*}
 		\frac{1}{\varphi(a)}\sum_{\substack{y=1\\\gcd(y,a)=1}}^{a-1}\int_2^{\ell a b+b y}\frac{\d t}{\log t}
 		&=\frac{1+o(1)}{\varphi(a)\log g}\sum_{\substack{y=1\\\gcd(y,a)=1}}^{a-1}(\ell a b+b y)\\
 		&=\frac{1+o(1)}{\varphi(a)\log g}\left(\ell a b\varphi(a)+b\cdot\frac{a\varphi(a)}{2}\right)\\
 		&=\frac{1+o(1)}{\log g}\left(\ell a b+\frac{a b}{2}\right)\,. 
 	\end{align*} 
 	Therefore,  
 	\begin{align*}
 		\pi_{\ell,a,b}&=\int_2^{g}\frac{\d t}{\log t}-\frac{1+o(1)}{\log g}\frac{(2\ell+1)a b}{2}+O(g e^{-D\sqrt{\log g}})\\
 		&=\left(1-\frac{(2\ell+1)a b}{2(\ell a b+a b-b)}+o(1)\right)\frac{g}{\log g}\\
 		&=\left(1-\frac{(2\ell+1)a}{2(\ell a+a-1)} + o(1)  \right)\pi(g)\,. 
 	\end{align*} 
 	This completes the proof of Theorem \ref{th1}.
 \end{proof}
 
 \subsection{Proof of Theorem \ref{thm-1}}
\begin{proof}[Proof of Theorem \ref{thm-1}]
	Let $c= 1-\frac{1}{8(\ell+1)}$. Then 
	\begin{align*}
		cg
		=(\ell+1)ab-a-b-\frac{(\ell+1) ab-a-b}{8(\ell+1)}
		>\ell ab +\left(\frac{7}{8}-\frac{1}{a}-\frac{1}{b}\right)ab>\ell ab
	\end{align*}
	by $\ell\geq 1$ and $b>a>e^{\ell+1}>7.$
	Since $cg>\ell ab$, we have
	\begin{align}\label{ineq-1}
		\pi_{\ell,a,b}
		&=\#\left\{p\in \cP: \ell ab<p\leq g, p=\ell ab+ax+by,(x,y)\in\Z_{\geq 0}^{2}\right\}\nonumber\\
		&\geq \pi(g)-\pi(cg)-\#\{p\in\cP: cg< p\leq g, p\notin S\}
	\end{align}
	by Lemma \ref{lem-5-1}. The arguments will be separated into the following seven cases.

	\textbf{Case 1.} $a\geq \max\{e^{(\ell+1)},6 \cdot 10^{4}+1\}$.
	By $a>6 \cdot 10^{4}$ and (\ref{C1-ineq-1}), we have
	\begin{align}\label{C1-ineq-2}
		\frac{1}{4(\ell+1)} \left( \frac{1}{8}+\frac{1}{a}+\frac{2^{\omega(a)}}{\varphi(a)} \right) 
		&< \frac{1}{4(\ell+1)}\left(\frac{1}{8}+\frac{1}{6\cdot 10^{4}}+0.00556112 \right) \nonumber\\
		&= 0.03264444\ldots \cdot \frac{1}{\ell+1},
	\end{align}
	and
	\begin{equation*}
		g=(\ell+1)ab-a-b>ab.
	\end{equation*}
	It follows that
	\begin{align}\label{thm-2-C1-ineq-3}
		1-\frac{\log \big( 8(\ell+1)a \big) }{\log g}
		&\geq 1-\frac{\log \big( 8(\ell+1)a \big)}{\log (ab)}\nonumber\\
		&=1-\frac{\log 8+\log (\ell+1)+\log a}{\log a+\log b }.
	\end{align}
	If $\ell\geq \lceil \log (6\cdot 10^{4}+1)-1 \rceil=11$, that is, $\max\{e^{(\ell+1)},6\cdot 10^{4}+1\}=e^{(\ell+1)}$, then it follows that
	\begin{align}\label{C1-ineq-1-2}
		\frac{\log 8+\log (\ell+1)+\log a}{\log a+\log b}
		&=\frac{\log 8/\log a+\log(\ell+1)/\log a+1}{1+\log b/\log a}\nonumber\\
		&\leq \frac{\log 8/\log e^{(\ell+1)}+\log (\ell+1)/\log e^{(\ell+1)}+1}{1+\log b/\log a}\nonumber\\
		&\leq \frac{\log 8/12+\log 12/12+1}{2}\nonumber\\
		&= 0.69018117 \ldots .
	\end{align}
	If $1\leq \ell< \lceil \log (6\cdot 10^{4}+1)-1 \rceil=11$, that is, $\max\{e^{(\ell+1)},6\cdot 10^{4}+1\}=6\cdot 10^{4}+1$, then it follows that
	\begin{align}\label{C1-ineq-1-3}
		\frac{\log 8+\log (\ell+1)+\log a}{\log a+\log b}
		&=\frac{\log 8/\log a+\log(\ell+1)/\log a+1}{1+\log b/\log a}\nonumber\\
		&\leq \frac{\log 8/\log (6\cdot 10^{4})+\log (\ell+1)/\log (6\cdot 10^{4})+1}{1+\log b/\log a}\nonumber\\
		&\leq \frac{\log 8/\log(6\cdot 10^{4})+\log 11/\log(6\cdot 10^{4})+1}{2}\nonumber\\
		&= 0.70347646 \ldots.
	\end{align}
	Hence, from (\ref{thm-2-C1-ineq-3}), (\ref{C1-ineq-1-2}), and (\ref{C1-ineq-1-3}), we conclude that
	\begin{align*}
		1-\frac{\log \big( 8(\ell+1)a \big) }{\log g} 
		&\geq 1-\frac{\log 8+\log (\ell+1)+\log a}{\log a+\log b}\\
		&\geq 1-0.703476469\\
		&=0.296523531, \quad (\ell\geq 1). 
	\end{align*}
	It follows that
	\begin{equation}\label{C1-ineq-3}
		\left( 1-\frac{\log \big( 8(\ell+1)a \big) }{\log g}
		\right)^{-1}\leq 3.37241383.
	\end{equation}
	Hence, by Lemma \ref{lem-6}, (\ref{C1-ineq-2}), and (\ref{C1-ineq-3}), we have
	\begin{align}\label{C1-ineq-5}
		&\#\left\{p\in\cP:cg< p\leq g, p\notin S \right\} \nonumber\\
		< & \frac{1}{4(\ell+1)} \left( \frac{1}{8}+\frac{1}{a}+\frac{2^{\omega(a)}}{\varphi(a)} \right) \cdot \left( 1-\frac{\log \big(8(\ell+1)a \big)}{\log g}\right)^{-1}\cdot \frac{g}{\log g}\nonumber\\
		<& (0.032644449 \cdot 3.37241383)\cdot\frac{1}{\ell+1} \frac{g}{\log g}\nonumber\\
		<& 0.110090592 \cdot \frac{1}{\ell+1} \frac{g}{\log g}.
	\end{align}
	It is clear that $g=(\ell+1)ab-a-b>\max\{\ell e^{2(\ell+1)},7\cdot 10^{5}\}$ by $a\geq \max\{e^{\ell+1},6\cdot 10^{4}+1\}$.
	By (\ref{ineq-1}), Lemma \ref{lem-2}, and (\ref{C1-ineq-5}), we have
	\begin{align*}
		\pi_{\ell,a,b}&\geq \pi(g)-\pi(cg)-\#\{p\in\cP: cg< p\leq g, p\notin S\}\\
		&\geq  (0.11604925-0.110090592) \cdot \frac{1}{\ell+1}\frac{g}{\log g}\\
		&>0.0059 \cdot \frac{1}{\ell+1}\frac{g}{\log g}.
	\end{align*}
	
	\textbf{Case 2.} $\max\{ e^{\ell+1},631\}\leq a\leq 6\cdot 10^{4}$. By $\ell\geq 1$ and $\max\{ e^{\ell+1},631\}\leq a<b$, we have 
	\begin{align}\label{C2-ineq-3}
		g&=(\ell+1)ab-a-b\nonumber\\
		&=\ell ab+(a-1)(b-1)-1\nonumber\\
		& \geq \ell a(a+1)+a(a-1)-1\nonumber\\
		&= \ell a(a+1)+(a-1)^{2}+a-2 \nonumber\\
		&> \max\{\ell e^{2(\ell+1)},7\cdot 10^{5}\}.
	\end{align} 
	Then,
	\begin{equation}\label{C2-ineq-1}
		\frac{\log \big(8(\ell+1)a\big)}{\log g}
		<\frac{\log\big(8(\ell+1)a\big)}{\log \big( \ell a(a+1)+(a-1)^{2}+a-2 \big)}<1.
	\end{equation}
	By Lemma \ref{lem-6}, (\ref{C2-ineq-1}), and the help of a computer\footnote{In this case, we employed a Python program--utilizing the Python packages numpy, math, sympy, and numba--to iterate over all integers $\ell$ and $a$, to find the maximum value of the coefficient of $\frac{1}{\ell+1}\frac{g}{\log g}$. For other analogous cases, we also used Python for computational purposes.}, 
	we have 
	\begin{align}
		&\#\left\{p\in\cP:cg< p\leq g, p\notin S \right\}\nonumber\\
		<& \frac{1}{4(\ell+1)\varphi(a)} \Bigg(\sum_{\stackrel{1\leq y\leq a/8+1}{\gcd(y,a)=1} }1\Bigg) \left( 1-\frac{\log \big(8(\ell+1)a\big)}{\log g}\right)^{-1}\cdot \frac{g}{\log g}\nonumber\\
		<& \frac{1}{4(\ell+1)\varphi(a)} \Bigg(\sum_{\stackrel{1\leq y\leq a/8+1}{\gcd(y,a)=1} }1\Bigg) \left( 1-\frac{\log\big(8(\ell+1)a\big)}{\log \left( \ell a(a+1)+(a-1)^{2}+a-2 \right)} \right)^{-1}\cdot \frac{g}{\log g}\nonumber\\
		\leq& 	0.10985491\ldots\cdot \frac{1}{\ell+1}\frac{g}{\log g}, \label{thm-2-C2-ineq-1}
	\end{align}
	and the last equality holds if $\ell=5$ and $a=660$.
	By \eqref{ineq-1}, (\ref{C2-ineq-3}), Lemma \ref{lem-2}, and (\ref{thm-2-C2-ineq-1}), 
	we have
	\begin{align*}
		\pi_{\ell,a,b}
		&\geq \pi(g)-\pi(cg)-\#\{p\in\cP: cg< p\leq g, p\notin S\}\\
		&> (0.11604925- 0.10985492)\cdot \frac{1}{\ell+1}\frac{g}{\log g}\\
		&> 0.0061 \cdot \frac{1}{\ell+1}\frac{g}{\log g}.
	\end{align*}
	
	\textbf{Case 3.} $\max\{e^{\ell+1},149\}\leq a\leq 630$ and $g\geq 3\cdot 10^{6}$. It is clear that 
	$$
	1\leq\ell\leq \lfloor \log 630 \rfloor-1=5
	$$
	and 
	$$
	g=\ell ab+(a-1)(b-1)-1>\ell ab> \ell e^{2(\ell+1)}.
	$$
	It follows that $g\geq \max\left\{\ell e^{2(\ell+1)}, 3\cdot 10^{6}\right\}.$
	By $g=(\ell+1) ab-a-b\geq 3\cdot 10^{6}$ and $b>a$, we have 
	$$
	b\geq \max\left\{\left\lceil \frac{3\cdot 10^{6}+a}{(\ell+1)a-1} \right\rceil, a+1 \right\}
	:=\delta_{\ell,a}.
	$$
	It follows that
	\begin{align}
		g&= \ell ab+(a-1)(b-1)-1\nonumber\\
		&\geq\ell a\delta_{\ell,a} +(a-1)(\delta_{\ell,a}-1)-1.
	\end{align}
	Then,
	\begin{equation}\label{C3-ineq-1}
		\frac{\log \big(8(\ell+1)a\big)}{\log g}
		\leq \frac{\log\big(8(\ell+1)a\big)}{\log \big(\ell a\delta_{\ell,a} +(a-1)(\delta_{\ell,a}-1)-1 \big)}<1.
	\end{equation}
	By Lemma \ref{lem-6}, (\ref{C3-ineq-1}), and the help of a computer, we have 
	\begin{align}\label{thm-2-C3-ineq-1}
		&\#\left\{p\in\cP:cg< p\leq g, p\notin S \right\}\nonumber\\
		<& \frac{1}{4(\ell+1)\varphi(a)} \Bigg(\sum_{\stackrel{1\leq y\leq a/8+1}{\gcd(y,a)=1} }1\Bigg) \left( 1-\frac{\log \big(8(\ell+1)a\big)}{\log g}\right)^{-1}\cdot \frac{g}{\log g}\nonumber\\
		\leq & \frac{1}{4(\ell+1)\varphi(a)} \Bigg(\sum_{\stackrel{1\leq y\leq a/8+1}{\gcd(y,a)=1} }1\Bigg) \left( 1-\frac{\log\big(8(\ell+1)a\big)}{\log \left(\ell a\delta_{\ell,a} +(a-1)(\delta_{\ell,a}-1)-1 \right)} \right)^{-1}\cdot \frac{g}{\log g}\nonumber\\
		\leq & 0.10537860\ldots\cdot \frac{1}{\ell+1}\frac{g}{\log g},
	\end{align}
		and the last equality holds if $\ell=5$ and $a=630$.
	By (\ref{ineq-1}), $g \geq \max\left\{\ell e^{2(\ell+1)},3\cdot 10^{6}\right\}$, Lemma \ref{lem-2}, and (\ref{thm-2-C3-ineq-1}), we have
	\begin{align*}
		\pi_{\ell,a,b}
		&\geq \pi(g)-\pi(cg)-\#\left\{p\in\mathcal{P}: cg< p\leq g, p\notin S\right\}\\
		&> (0.11604925- 0.10537861)\cdot \frac{1}{\ell+1}\frac{g}{\log g}\\
		&> 0.0106 \cdot \frac{1}{\ell+1}\frac{g}{\log g}.
	\end{align*}
	
	\textbf{Case 4.} $\max\{e^{\ell+1},16\}\leq a\leq 148$ and $g\geq 7\cdot 10^{5}$.
	By $g=(\ell+1) ab-a-b\geq 7\cdot 10^{5}$ and $b>a$, we have 
	$$
	b\geq \max\left\{\left\lceil \frac{7\cdot 10^{5}+a}{(\ell+1)a-1} \right\rceil, a+1 \right\}
	:=\delta_{\ell,a}.
	$$
	By Lemma \ref{lem-6}, (\ref{C3-ineq-1}), and the help of a computer, we have 
	\begin{align}
		&\#\left\{p\in\cP:cg< p\leq g, p\notin S \right\}\nonumber\\
		<& \frac{1}{4(\ell+1)\varphi(a)} \Bigg(\sum_{\stackrel{1\leq y\leq a/8+1}{\gcd(y,a)=1} }1\Bigg) \left( 1-\frac{\log \big(8(\ell+1)a\big)}{\log g}\right)^{-1}\cdot \frac{g}{\log g}\nonumber\\
		\leq & \frac{1}{4(\ell+1)\varphi(a)} \Bigg(\sum_{\stackrel{1\leq y\leq a/8+1}{\gcd(y,a)=1} }1\Bigg) \left( 1-\frac{\log\big(8(\ell+1)a\big)}{\log \left(\ell a\delta_{\ell,a} +(a-1)(\delta_{\ell,a}-1)-1 \right)} \right)^{-1}\cdot \frac{g}{\log g}\nonumber\\
		\leq & 0.10937827\ldots\cdot \frac{1}{\ell+1}\frac{g}{\log g}, \label{thm-2-C4-ineq-1}
	\end{align}
	and the last equality holds if $\ell=1$ and $a=20$.
	By (\ref{ineq-1}), $g \geq \max\left\{\ell e^{2(\ell+1)}, 7\cdot 10^{5}\right\}$, Lemma \ref{lem-2}, and (\ref{thm-2-C4-ineq-1}), we have
	\begin{align*}
		\pi_{\ell,a,b}
		&\geq \pi(g)-\pi(cg)-\#\{p\in\mathcal{P}: cg< p\leq g, p\notin S\}\\
		&> (0.11604925- 0.10937827)\cdot \frac{1}{\ell+1}\frac{g}{\log g}\\
		&> 0.0066 \cdot \frac{1}{\ell+1}\frac{g}{\log g}.
	\end{align*}
	
	\textbf{Case 5.} $\max\left\{e^{\ell+1},149\right\}\leq a\leq 630$ with $g<3\cdot 10^{6}$, and $\max\left\{e^{\ell+1},16\right\}\leq a\leq 148$ with $g<7\cdot 10^{5}$. By Lemma \ref{lem-5-1} and the help of a computer\footnote{We employed the following computational method. First, we generated all primes up to $3 \cdot 10^6$ using the pyprimesieve Python package. For a given set of integer parameters $(\ell, a, b)$, we then iterated over all primes (until we complete our aim) within the interval $[\ell ab, g_{\ell, a,b}]$ in descending order. For each prime $p$ in this range, we calculated the integer $y\in [1,a-1]$ satisfying $b^{-1} p \equiv y \pmod{a}$ where $b^{-1}\in[1,a-1]$ is the multiplicative inverse of $b$ modulo $a$, and then checked the inequality $p > \ell ab + b y$. This search was implemented in Python with the numba and numpy Python packages. For an analogous case, we also do it.}, we have 
	\begin{align*}
		\pi_{\ell,a,b}
		&= \#\left\{p\in\cP : p\leq g,p=\ell ab+ax+by,(x,y)\in \Z_{\geq 0}^{2}\right\}\nonumber\\
		&= \#\left\{p\in \mathcal{P}: \ell ab< p\leq g,p\equiv by \pmod{a}, p>\ell ab+by, 1\leq y\leq a-1\right\}\nonumber\\
		&\geq 0.006 \cdot \frac{1}{\ell+1}\frac{g}{\log g}.
	\end{align*}

	\textbf{Case 6.} $e^{\ell+1}\leq a\leq 15$, that is, $\ell=1$, and $b\geq 50a^{2}$. It follows that $\ell ab+b>50a^{3}$ and
	$$
	g=\ell ab+(a-1)(b-1)-1> \ell ab> 50 a^{2}.
	$$
 By $a\leq 15$, $\min\left\{g, \ell ab+b\right\}>50a^{2}$, Lemma \ref{lem-5-1}, and Lemma \ref{lem-7}, we have
	\begin{align*}
		\pi_{\ell,a,b}&=\#\{p\in\cP :\ell ab\leq p\leq g,p=\ell ab+ax+by,(x,y)\in \Z_{\geq 0}^{2}\}\\
		&= \#\{p\in \mathcal{P}: \ell ab< p\leq g,p\equiv by \pmod{a}, p>\ell ab+by, 1\leq y\leq a-1\}\nonumber\\
		&\geq \pi(g;a,b)-\pi(\ell ab+b;a,b)\\
		&> \frac{g}{\varphi(a)\log g}-\frac{\ell ab+b}{\varphi(a)\log\big(\ell ab+b \big)}\left(1+\frac{5}{2\log\big(\ell ab+b \big)}\right)\\
		&=\frac{1}{\varphi(a)}\frac{g}{\log g}\left(1-\frac{\ell ab+b}{g}\frac{\log g}{\log\big(\ell ab+b \big)} \left(1+\frac{5}{2\log\big(\ell ab+b \big)}\right) \right).
	\end{align*}
	Since 
	\begin{equation*}
		\frac{\ell ab+b}{g}=\frac{\ell ab+b}{(\ell+1)ab-a-b}
		=\frac{\ell+\frac{1}{a}}{(\ell+1)-\frac{1}{b}-\frac{1}{a}}
		\leq \frac{\ell+\frac{1}{a}}{(\ell+1)-\frac{1}{50a^{2}}-\frac{1}{a}},
	\end{equation*}
	\begin{equation*}
		\frac{\log g}{\log(\ell ab+b)}
		< \frac{\log \big( (\ell+1)ab \big)}{\log(\ell ab)}
		=\frac{\log(1+\frac{1}{\ell})+\log (\ell ab)}{\log (\ell ab)}
		\leq 1+\frac{\log(1+\frac{1}{\ell})}{\log (50 \cdot \ell a^{3})},
	\end{equation*}
	and
	\begin{equation*}
		1+\frac{5}{2\log\big(\ell ab+b\big)}\leq 1+\frac{5}{2\log\big(50a^{2}(\ell a+1)\big)},
	\end{equation*}
	we have
	\begin{align*}
		&\quad \pi_{\ell,a,b}\cdot\frac{\log g}{g}\\
		&>\frac{1}{\varphi(a)}
		\left(1-\frac{\ell+\frac{1}{a}}{(\ell+1)-\frac{1}{50a^{2}}-\frac{1}{a}}
		\left(1+\frac{\log(1+\frac{1}{\ell})}{\log (50 \cdot \ell a^{3})} \right) 
		\left(1+\frac{5}{2\log\big(50a^{2}(\ell a+1) \big)}\right) \right)\\
		&\geq 0.04658379 \ldots \cdot \frac{1}{\ell+1}
	\end{align*}
	with the help of a computer, and the last equality holds if $a=13$.
	
	\textbf{Case 7.} $e^{\ell+1}\leq a\leq 15$, that is $\ell=1$, and $b< 50a^{2}$ with $\gcd(a,b)=1$.
	By Lemma \ref{lem-5-1} and the help of a computer, we have 
	\begin{align*}
		\pi_{\ell,a,b}
		&= \#\{p\in\cP :\ell ab< p\leq g,p=\ell ab+ax+by,(x,y)\in \Z_{\geq 0}^{2}\}\\
		&= \#\{p\in \mathcal{P}: \ell ab< p\leq g,p\equiv by \pmod{a}, p>\ell ab+by, 1\leq y\leq a-1\}\\
		&\geq 0.1 \cdot \frac{1}{\ell+1}\frac{g}{\log g}.
	\end{align*}
	
	This completes the proof of Theorem \ref{thm-1}.
\end{proof}

\subsection{Proo of Theorem \ref{th:22}}  
\begin{proof}[Proof of Theorem \ref{th:22}]
Since the $\ell$-Ap\'ery set ${\rm Ap}_\ell(a,b)=\{m_0^{(\ell)},m_1^{(\ell)},\dots,m_{a-1}^{(\ell)}\}$, satisfying
$$
{\rm (i)}\,m_i^{(\ell)}\equiv i\pmod{a}\quad{\rm (ii)}\,m_i^{(\ell)}\in S_{\ell}(a,b)\quad {\rm (iii)}\,m_i^{(\ell)}-a\in G_{\ell}(a,b),  
$$ 
is given by ${\rm Ap}_\ell(a,b)=\{b(\ell a+v):0\le v\le a-1\}$ (\cite[pp.58-59]{Kom2}),
we have 
\begin{align*}  
	\pi^{*}_{\ell,a,b}
	&=\sum_{v=0}^{a-1} \#\left\{p\in\mathcal P: p<b(\ell a+v),\,p\equiv b v\pmod{a}\right\}\\
	&\ge \pi(\ell ab-1;a,0)+\sum_{\substack{v=1\\\gcd(v,a)=1}}^{a-2}\pi\bigl(b(\ell a+v);a,b v\bigr)+\pi\bigl(g;a,b(a-1)\bigr)\,. 
\end{align*} 
By Lemma \ref{pi(x;m,l)}, there is $X(m)$ such that for $x\geq X(m)$, then
$$
\pi(x;m,l)>\frac{x}{\varphi(m)\log x}.
$$
Thus, if $\ell ab\ge X(a)$, we have 
\begin{align*}  
\pi^{*}_{\ell,a,b}
&\ge 0+\sum_{\substack{v=1\\\gcd(v,a)=1}}^{a-2}\frac{1}{\varphi(a)}\frac{b(\ell a+v)}{\log\bigl(b(\ell a+v)\bigr)} +\frac{1}{\varphi(a)}\frac{g}{\log g}\\
&\ge \sum_{\substack{v=1\\\gcd(v,a)=1}}^{a-2}\frac{1}{\varphi(a)}\frac{b(\ell a+v)}{\log g}+\frac{1}{\varphi(a)}\frac{g}{\log g}\\ 
&=\frac{b}{\varphi(a)\log g}\sum_{\substack{v= 1\\ \gcd(v,a)=1}}^{a-1}(\ell a+v)-\frac{a}{\varphi(a)\log g}\,.
\end{align*}
Since 
$$
\sum_{\substack{v=1 \\\gcd(v,a)=1}}^{a-1}v=\frac{a\varphi(a)}{2}\,, 
$$ 
we have 
\begin{align*}  
	\pi^{*}_{\ell,a,b}
	&\ge\frac{b}{\varphi(a)\log g}\left(\ell a \varphi(a)+\frac{a\varphi(a)}{2}\right)-\frac{a}{\varphi(a)\log g}\\ 
	&=\frac{(2\ell+1)ab}{2\log g}-\frac{a}{\varphi(a)\log g}\\
	&=\frac{(2\ell+1)ab}{2(\ell a+a-1)b}\frac{g+a}{\log g}-\frac{a}{\varphi(a)\log g}\\
	&=\frac{(2\ell+1)a}{2(\ell a+a-1)}\frac{g}{\log g}+\frac{(2\ell+1)a}{2(\ell a+a-1)} \frac{a}{\log g}-\frac{a}{\varphi(a)\log g}\\
	&=\frac{(2\ell+1)a}{2(\ell a+a-1)}\frac{g}{\log g}+\left( \frac{2\ell+1}{2(\ell+1-1/a)}-\frac{1}{\varphi(a)} \right)\frac{a}{\log g}\\
	&>\frac{(2\ell+1)a}{2(\ell a+a-1)}\frac{g}{\log g}.
\end{align*} 

This completes the proof of Theorem \ref{th:22}.
\end{proof}
\subsection{Proof of Theorem \ref{thm-4}}
\begin{proof}[Proof of Theorem \ref{thm-4}]
Assuming that $a=2$ and $b> 2$ with $\gcd(a,b)=1$, that is, $b>2$ is an odd integer. Let $p$ be a prime integer such that $p\in S_{1}(2,b)$ and $p\leq g_{1,2,b}$. By Lemma \ref{lem-5-1}, there exists a non-negative $x$ such that $p=2b+2x+b$ with $1\leq x\leq b-1$. However, this leads to a contradiction because $p\leq g_{1,2,b}=4b-2-b$. Subsequently, we conclude that $\pi_{1,2,b}^{*}=\pi(g_{1,2,b})$. 
If $g_{1,2,b}\geq 17$, by Lemma \ref{pi(x)estimate}, we have
$$
\pi_{1,2,b}^{*}=\pi(g_{1,2,b})>\frac{g_{1,2,b}}{\log g_{1,2,b}}.
$$
If $g_{1,2,b}=3b-2<17$, then $b=3$ or $b=5$. In these cases, we have
\begin{align*}
	\pi^{*}_{1,2,3}=\#\{2,3,5,7\}>\frac{7}{\log 7}=\frac{g_{1,2,3}}{\log g_{1,2,3}},
\end{align*}
and
\begin{align*}
	\pi^{*}_{1,2,5}=\#\{2,3,5,7,11,13\}>\frac{13}{\log 13}=\frac{g_{1,2,5}}{\log g_{1,2,5}}.
\end{align*}
 Hence, Theorem \ref{thm-4} holds for $a=2$, and we will assume that $a\geq 3$.

Suppose that $\vartheta<1$ is a positive number and $\theta=(\ell+\vartheta)/(\ell+1)$ such that $\theta g>\ell ab+a$. It follows that 
$$ 
a<\theta g-\ell ab = \vartheta ab -\theta a-\theta b<\vartheta ab
$$ 
and $0<\vartheta -\theta/b-\theta/a<\vartheta<1$.
By Lemma \ref{lem-5-1} we have
\begin{align}\label{thm-4-ineq-7}
	\pi^{*}_{\ell,a,b} 
	&\geq \pi(\theta g)- \#\{p\in\cP: \ell ab < p \leq \theta g, p\in S\}\nonumber\\
	&= \pi(\theta g)-\#\{p\in\cP: \ell ab<p\leq \theta g, p=\ell ab+ax+by,(x,y)\in\Z_{\geq 0}^{2}\}.
\end{align}
If $y=0$ and $p=\ell ab+ax+by$ for some non-integer $x$, then we have $p=a(\ell b+x)$ and hence $p$ is not a prime integer by $\ell\geq 1$ and $b>a>2$. 
So, we have
\begin{align}\label{thm-4-ineq-1}
	&\#\{p\in\cP: \ell ab<p\leq \theta g, p=\ell ab+ax+by,(x,y)\in\Z_{\geq 0}^{2}\}\nonumber\\
	= &\#\{p\in\cP: \ell ab<p\leq \theta g, p=\ell ab+ax+by,(x,y)\in\Z_{\geq 0}^{2}, y\geq 1\}\nonumber\\
	\leq &\#\{p\in\cP: \ell ab<p\leq \theta g, p\equiv by \pmod{a},1\leq by\leq \theta g-\ell ab\}\nonumber\\
	=  &\sum_{\stackrel{1\leq y\leq (\theta g-\ell ab)/b}{\gcd(y,a)=1}}\big( \pi(\theta g;a,by)- \pi(\ell ab;a,by)\big).
\end{align}
By \eqref{thm-4-ineq-7}, \eqref{thm-4-ineq-1}, and $\theta g-\ell ab<\vartheta ab$, we have
\begin{align}\label{thm-4-ineq-3}
		\pi^{*}_{\ell,a,b} \geq \pi(\theta g)-\sum_{\stackrel{1\leq y\leq \vartheta a}{\gcd(y,a)=1}} \big( \pi(\theta g;a,by)- \pi(\ell ab;a,by)\big).
\end{align}
By Lemma \ref{lem-3} and $\theta g-\ell ab>a$, we have
\begin{align}\label{thm-4-ineq-4}
	\pi(\theta g;a,by)- \pi(\ell ab;a,by) 
	&< \frac{2(\theta g-\ell ab)}{\varphi(a)\log \big( (\theta g-\ell ab)/a \big)}\nonumber\\
	&=\frac{2}{\varphi(a)} \frac{\log  g}{\log\big((\theta g-\ell ab)/a \big)} \frac{\theta g-\ell ab}{g} \frac{g}{\log  g}.
\end{align} Recalling that $g=(\ell+1)b-a-b$ and $\theta= (\ell+\vartheta)/(\ell+1)$, we have
\begin{align}\label{thm-4-ineq-5}
	\frac{\theta g-\ell ab}{g}=\frac{\vartheta ab-\theta a-\theta b}{(\ell+1)ab-a-b}=\frac{\vartheta-\theta/b-\theta/a}{\ell+1-1/b-1/a}, 
\end{align}
and hence
\begin{align}
	\frac{\log\big((\theta g-\ell ab)/a \big)}{\log g}
	&= 1+\frac{\log\big((\theta-\ell ab)/ g\big)}{\log g}-\frac{\log a}{\log g}\nonumber\\
	&= 1-\frac{\log a}{\log  g}+\frac{\log( \vartheta-\theta/b-\theta/a)}{\log  g}-\frac{\log(\ell+1-1/b-1/a)}{\log  g}\nonumber\\
	&> 1-\frac{\log a}{\log  g}-\frac{\log(\ell+1)}{\log g}-\frac{\log( \vartheta-\theta/a-\theta/b)^{-1}}{\log  g} \label{thm-4-ineq-6}\\
	&:= H(\ell,a,b,\vartheta).\nonumber
\end{align}
If $\theta g>17$, by Lemma \ref{pi(x)estimate}, we have 
\begin{align}\label{thm-4-ineq-4-1}
	\pi(\theta g)>\frac{\theta g}{\log (\theta g)}>\theta \frac{g}{\log g}.
\end{align}
Therefore, by \eqref{thm-4-ineq-3}, \eqref{thm-4-ineq-4}, \eqref{thm-4-ineq-5}, \eqref{thm-4-ineq-6}, and \eqref{thm-4-ineq-4-1}, we have
\begin{align}
	\pi^{*}_{\ell,a,b} 
	&\geq \pi(\theta g)-\sum_{\stackrel{1\leq y\leq \vartheta a}{\gcd(y,a)=1}} \big( \pi(\theta g;a,by)- \pi(\ell ab;a,by)\big)\nonumber\\
	&> \left( \theta -\frac{2}{\varphi(a)}
		\Bigg(\sum_{\stackrel{1\leq y\leq \vartheta a}{\gcd(y,a)=1}}1 \Bigg)
		\frac{\log  g}{\log\big((\theta g-\ell ab)/a \big)} \frac{\theta g-\ell ab}{g}
	\right) \frac{g}{\log g}\nonumber\\
	&> \left( \theta -\frac{2}{\varphi(a)}\Bigg(\sum_{\stackrel{1\leq y\leq \vartheta a}{\gcd(y,a)=1}}1\Bigg) H(\ell,a,b,\vartheta)^{-1} \frac{\vartheta - \theta/b -\theta/a}{\ell+1-1/b-1/a}
\right) \frac{g}{\log g}. \label{thm-4-ineq-7-2}
\end{align}
The arguments will be separated into the following four cases.

\textbf{Case 1.} $a>\max\{\ell,6\cdot 10^{4}\}$.
Let $\vartheta=0.055$ and $\theta=(\ell+\vartheta)/(\ell+1)$. By $g=(\ell+1)ab-a-b$ and $b>a>6\cdot 10^{4}$, we have 
\begin{align*}
	\theta g-\ell ab 
	=\vartheta ab-\theta a-\theta b
	>0.055ab-a-b 
	>(0.055a-2)b
	>a
\end{align*}
 and $\theta g>17$. It is easy to see that 
 \begin{equation*}
	\vartheta -\frac{\theta}{b}-\frac{\theta}{a}>\vartheta-\frac{1}{b}-\frac{1}{a}>0.055-\frac{2}{6\cdot 10^{4}}>0.05496666,
\end{equation*}
and hence $1<(\vartheta-\theta/b-\theta/a)^{-1}<18.2$.
Recalling that $b>a$, we have
\begin{align*}
	g&=(\ell+1)ab-a-b\\
	&=\ell ab+(a-1)(b-1)-1\\
	&\geq \ell a(a+1)+(a-1)a-1\\
	&=(\ell+1)a^{2}+\ell a-a-1\\
	&\geq (\ell+1)a^{2}-1.
\end{align*}
Therefore, by \eqref{thm-4-ineq-6}, $g\geq (\ell+1)a^{2}-1$, and  $1<(\vartheta-\theta/a-\theta/b<\vartheta)^{-1}<18.2$, we have
\begin{align*}
	&H(\ell,a,b,\vartheta)\\
	>& 1-\frac{\log a}{\log  \big( (\ell+1)a^{2}-1 \big)}-\frac{\log(\ell+1)}{\log  \big( (\ell+1)a^{2}-1 \big)}-\frac{\log 18.2}{\log \big( (\ell+1)a^{2}-1 \big)}\\
	=&\frac{\log  \big( (\ell+1)a^{2}-1 \big)-\log a-\log(\ell+1)}
	{\log  \big( (\ell+1)a^{2}-1 \big)}
	-\frac{\log 18.2}{\log \big( (\ell+1)a^{2}-1 \big)}\\
	>& \frac{\log\left(1-\frac{1}{(\ell+1)a^{2}}\right)+\log a}{\log\left({(\ell+1)a^{2}}\right)}-\frac{\log 18.2}{\log \big( (\ell+1)a^{2}-1 \big)}.
\end{align*}
Recalling that  $a>\ell$ and $a>6\cdot 10^{4}$, we have
\begin{align*}
	H(\ell,a,b,\vartheta)
	&> \frac{\log\left(1-\frac{1}{(\ell+1)a^{2}}\right)+\log a}{\log a^{3}}-\frac{\log18.2}{\log \big( (\ell+1)a^{2}-1 \big)}\\
	&=\frac{1}{3}+\frac{\log(1-\frac{1}{(\ell+1)a^{2}})}{3\log a}-\frac{\log 18.2}{\log \big( (\ell+1)a^{2}-1 \big)}\\
	&\geq \frac{1}{3}+ \frac{\log(1-\frac{1}{2\cdot (6\cdot 10^{4})^{2}})}{3\log (6\cdot 10^{4})}-\frac{\log 18.2}{\log \big(2\cdot (6\cdot 10^{4})^{2}-1\big)}\\
	&= 0.2055024643\ldots, 
\end{align*}
and hence
\begin{align}\label{thm-4-C1-ineq-1}
	H(\ell,a,b,\vartheta)^{-1} <4.866121719.
\end{align}
Recalling that $\vartheta=0.055$, $\theta=(\ell+\vartheta)/(\ell+1)<1$, and $b>a>6\cdot 10^{4}$, we have
\begin{align}\label{thm-4-C1-ineq-2}
\frac{\vartheta-\theta/b-\theta/a}{\ell+1-1/b-1/a}
<\frac{0.055}{\ell+1-2/(6\cdot 10^{4})}
<\frac{0.055+1.84\cdot 10^{-6}}{\ell+1} \quad(\ell\geq 1).
\end{align}
By Lemma \ref{lem-4}, $a>6\cdot 10^{4}$, and  \eqref{C1-ineq-1}, we have
\begin{align}
	\frac{1}{\varphi(a)}\sum_{\stackrel{1\leq y\leq \vartheta a}{\gcd(y,a)=1}} 1
	&\leq \frac{1}{\varphi(a)} \left(\frac{\varphi(a)}{a} \cdot \vartheta a+2^{\omega(a)}\right)\nonumber\\
	&= \vartheta+\frac{2^{\omega(a)}}{\varphi(a)}\nonumber\\
	&< 0.055+0.00556112\nonumber\\
	&= 0.06056112. \label{thm-4-C1-ineq-3}
\end{align}
Therefore, by (\ref{thm-4-ineq-7-2}), \eqref{thm-4-C1-ineq-1}, \eqref{thm-4-C1-ineq-2}, and \eqref{thm-4-C1-ineq-3}, we have
\begin{align*}\label{thm-4-C1-ineq-4}
	\pi^{*}_{\ell,a,b} 
	 &> \left( \theta-
	\frac{2}{\varphi(a)}
	\Bigg( \sum_{\stackrel{1\leq y\leq \vartheta a}{\gcd(y,a)=1}}1\Bigg) H(\ell,a,b,\vartheta)^{-1} 
	\frac{\vartheta-\theta/b-\theta/a}{\ell+1-1/b-1/a} 
	\right) \frac{g}{\log g}\\
	&>\left( 
		\frac{\ell+0.055}{\ell+1}- 2\cdot 0.06056112 \cdot 4.866121719 \cdot \frac{0.055+1.84\cdot 10^{-6}}{\ell+1}
	\right)
	\frac{g}{\log g}\\
	&> \frac{\ell+0.02258}{\ell+1}\frac{g}{\log g}.
\end{align*}

\textbf{Case 2.} $\max\{\ell,650\}<a\leq 6\cdot 10^{4}$. Let $\vartheta=0.05 $ and $\theta = (\ell+\vartheta)/(\ell+1)$. By $g=(\ell+1)ab-a-b$ and $b>a>650$, we have 
$$
\theta g-\ell ab 
= \vartheta ab-\theta a-\theta b
> 0.05ab-a-b
> (0.05a-2)b
> a
$$
and $\theta g>17$.
It is easy to see that 
\begin{equation*}
	\vartheta-\frac{\theta}{b}-\frac{\theta}{a}
	>\vartheta-\frac{2\theta}{a}
	>0.05 -\frac{2}{650}
	>0,
\end{equation*}
and hence 
\begin{align}\label{thm-4-C2-ineq-1}
	\left(\vartheta-\frac{2\theta}{a}\right)^{-1}
	>\left(\vartheta-\frac{\theta}{b}-\frac{\theta}{a}\right)^{-1}>1.
\end{align}
By \eqref{thm-4-ineq-6}, $g\geq (\ell+1)a^{2}-1$, and \eqref{thm-4-C2-ineq-1}, we have 
\begin{align*}
		H(\ell,a,b,\vartheta)
	&= 1-\frac{\log a}{\log  g}-\frac{\log(\ell+1)}{\log g}-\frac{\log( \vartheta-\theta/a-\theta/b)^{-1}}{\log  g}\nonumber\\
	&> 1-\frac{\log\big( a(\ell+1) \big)}{\log\big( (\ell+1)a^{2}-1 \big)}-\frac{\log(\vartheta -2\theta/a)^{-1}}{\log \big( (\ell+1)a^{2}-1\big)}
	\nonumber\\
	&:= H_{2}(\ell,a,\vartheta)
\end{align*}
and hence $H(\ell,a,b,\vartheta)^{-1}< H_{2}(\ell,a,\vartheta)^{-1}$.
Recalling that $\vartheta=0.05$ and $650<a<b$, we have
\begin{align}\label{thm-4-C2-ineq-2}
	\frac{\vartheta-\theta/b-\theta/a}{\ell+1-1/b-1/a}
	<
	\frac{\vartheta-\theta/a}{\ell+1-2/a}.
\end{align}
Hence, by \eqref{thm-4-ineq-7-2}, $H(\ell,a,b,\vartheta)^{-1}<H_{2}(\ell,a,\vartheta)^{-1}$, \eqref{thm-4-C2-ineq-2}, and the help of a computer, we conclude that
\begin{align*}
	\pi^{*}_{\ell,a,b} 
	&>\left( 
		\frac{\ell+\vartheta}{\ell+1}
		-\frac{2}{\varphi(a)}
		\Bigg(
			\sum_{\stackrel{1\leq y\leq \vartheta a}{
				\gcd(y,a)=1}} 1
		\Bigg)
			H_{2}(\ell,a,\vartheta)^{-1}
		\frac{\vartheta-\theta/a}{\ell+1-2/a}
	\right)  \frac{g}{\log g}\\
	&\geq \frac{\ell+0.02088778\ldots}{\ell+1}\frac{g}{\log g},
\end{align*}
and the last equality holds if $\ell=669$ and $a = 670$.

\textbf{Case 3.} $\max\{\ell,15\}<a\leq 650$. If $b\leq 1000$ with $\gcd(a,b)=1$, we have
\begin{align*}
	\pi^{*}_{\ell,a,b}
	&=\pi(\ell ab)+\#\{p\in\cP: \ell ab<p\leq g,p\notin S\}\\
	&= \pi(\ell ab)+\#\{p\in\cP:\ell ab<p\leq g, g-p =ax+by,(x,y)\in\Z_{\geq 0}^{2}\}\\
	&= \pi(\ell ab)+\#\left\{ p\in\cP: \ell ab<p\leq g,g-p \equiv by \pmod{a}, g-p\geq by,0\leq y\leq a-1 \right\}\\
	&>\frac{\ell ab}{\log (\ell ab)}+\frac{0.021}{\ell+1}\frac{g}{\log g}\\
	&>\frac{\ell+0.021}{\ell+1}\frac{g}{\log g}
\end{align*}
by Lemma \ref{lem-5}, $g>\ell a b>17$, Lemma \ref{pi(x)estimate}, and the help of a computer.
 If $b\geq 1001$ with $\gcd(a,b)=1$, we have
 \begin{align*}
 	g&=(\ell+1)ab-a-b\\
 	&=\ell ab+(a-1)(b-1)-1\\
 	&\geq 1001 \ell a+1000(a-1)-1.
 \end{align*}
 Let $\vartheta= 0.066$ and $\theta=(\ell+\vartheta)/(\ell+1)$. Then we have 
  $$
 \vartheta -\frac{\theta}{b} -\frac{\theta}{a}
 >\vartheta-\frac{1}{b}-\frac{1}{a}
 >0.066-\frac{1}{16}-\frac{1}{1001}>0.002,
 $$
 $$\theta g-\ell ab
 =\vartheta ab-\theta a-\theta b
 >\vartheta ab-a-b
 =\left( \vartheta-\frac{1}{b}-\frac{1}{a} \right) ba
 >a,
 $$
 and 
 $\theta g>17$ by $15<a\leq 650$ and $b\geq 1001$. 
 It is easy to see that
 \begin{align}
 	\frac{\vartheta-\theta/b-\theta/a}{\ell+1-1/b-1/a}
 	&< \frac{\vartheta-\theta/a}{\ell+1-1/b-1/a}\nonumber\\
 	&< \left( 1+\frac{2}{(\ell+1)a} \right) \frac{\vartheta-\theta/a}{\ell+1}\label{thm-4-C3-ineq-1}
 \end{align}
 and
 \begin{align}
 H(\ell,a,b,\vartheta)
 	&= 1-\frac{\log a}{\log  g}-\frac{\log(\ell+1)}{\log g}-\frac{\log( \vartheta-\theta/a-\theta/b)^{-1}}{\log  g}\nonumber\\
 	&\geq 1-\frac{\log a}{\log g}-\frac{\log (\ell+1)}{\log g}-\frac{\log(\vartheta-\theta/a-\theta/1001)^{-1}}{\log g}\nonumber\\
 	&\geq 1-\frac{\log a+ \log(\ell+1)+\log(\vartheta-\theta/a-\theta/1001)^{-1}}{\log \left(1001 \ell a+1000(a-1)-1\right)}\nonumber\\
 	&:=H_{3}(\ell,a,\vartheta)\label{thm-4-C3-ineq-2}
 \end{align}
 by \eqref{thm-4-ineq-6}, $g\geq 1001\ell a+1000(a-1)-1$, and $b\geq 1001$. Hence, by \eqref{thm-4-ineq-7-2}, \eqref{thm-4-C3-ineq-1}, \eqref{thm-4-C3-ineq-2}, and the help of a computer, we conclude that
 \begin{align*}
 	\pi^{*}_{\ell,a,b} 
 	&>\left( 
 	\frac{\ell+\vartheta}{\ell+1}
 	-\frac{2}{\varphi(a)}
 	\Bigg(\sum_{\stackrel{1\leq y\leq \vartheta a}{\gcd(y,a)=1}}1
 	\Bigg)
 	H_{3}(\ell,a,\vartheta)^{-1}
 \left( 1+\frac{2}{(\ell+1)a} \right)
 \frac{\vartheta-\theta/a}{\ell+1}
 	\right)  \frac{g}{\log g}\\
 	&\geq \frac{\ell+0.02315834\ldots}{\ell+1}\frac{g}{\log g},
 \end{align*}
 and the last equality holds if $\ell=563$ and $a=564$.
 
  \textbf{Case 4.} $\max\{\ell,2\} <a\leq 15$. If $b\leq 4$ with $\gcd(a,b)=1$, then $\ell=1,2$ and $(a,b)=(3,4)$. It is easy to see that
  \begin{equation*}
  	\pi_{1,3,4}^{*}
  	=\#\{2,3,5,7,11,13,17\}
  	=\pi(g_{1,3,4})
  	>\frac{g_{1,3,4}}{\log g_{1,3,4}}
  \end{equation*}
  and
  \begin{equation*}
  	\pi_{2,3,4}^{*}=\#\{2,3,5,7,11,13,17,19,23,29\}=\pi(g_{2,3,4})>\frac{g_{2,3,4}}{\log g_{2,3,4}}.
  \end{equation*}
   If $b\geq 5$, we have $\ell ab+b\geq 1\cdot 3\cdot 5+5> 17$ and hence, by Lemma \ref{pi(x)estimate}, we have $\pi(\ell ab+b)>\frac{\ell ab+b}{\log(\ell ab+b)}$. By Lemma \ref{lem-5-1} and $\pi(\ell ab+b)>\frac{\ell ab+b}{\log(\ell ab+b)}$, we have
  \begin{align*}
  	\pi_{\ell,a,b}^{*}
  	&=\pi(g)-\#\{p\in\cP: p\leq g, p=\ell ab+ax+by,(x,y)\in\Z_{\geq 0}^{2}\}\nonumber\\
  	&\geq \pi(\ell ab+b)
  	> \frac{\ell ab+b}{\log(\ell ab+b)}
  	> \frac{\ell ab+b}{\log g}\nonumber\\
  	&= \frac{\ell ab+b}{(\ell+1) ab-a-b}\frac{g}{\log g}>\frac{\ell+1/a}{\ell+1}\frac{g}{\log g}\nonumber\\
  	&\geq \frac{\ell+1/15}{\ell+1}\frac{g}{\log g}
  	> \frac{\ell+0.066}{\ell+1}\frac{g}{\log g}.
  \end{align*}
  
  This completes the proof of Theorem \ref{thm-4}.
\end{proof}

	\section*{Acknowledgments}
Y. D. is supported by the National Natural Science Foundation of China (Grant No.
12201544) and the China Postdoctoral Scicnce Foundation (Grant No. 2022M710121). T. K. was partly supported by JSPS KAKENHI Grant Number 24K22835.

\end{document}